\title{On pluricanonical systems of algebraic varieties of general type}
\author{Meng Chen}
\address{\rm Institute of Mathematics, Fudan University,
Shanghai, 200433, China} \email{mchen@fudan.edu.cn}
\thanks{Supported by National Outstanding
Young Scientist Foundation (\#10625103) and NNSFC Key project
(\#10731030)}
\newcommand{\bQ}{{\mathbb Q}}
\newcommand{\bP}{{\mathbb P}}
\newcommand{\roundup}[1]{\lceil{#1}\rceil}
\newcommand\lra{\longrightarrow}
\newcommand\OO{{\mathcal{O}}}
\newtheorem{thm}{Theorem}[section]
\newtheorem{lem}[thm]{Lemma}
\newtheorem{cor}[thm]{Corollary}
\newtheorem{prop}[thm]{Proposition}
\theoremstyle{definition}
\newtheorem{defn}[thm]{Definition}
\newtheorem{setup}[thm]{}
\newtheorem{rem}[thm]{Remark}
\newtheorem{OP}[thm]{Problem}
\theoremstyle{remark}
\begin{document}
\begin{abstract} We extend Koll\'ar's technique to
look for an explicit function $h(n)$ with $\varphi_m$ birational
onto its image for all integers $m\geq h(n)$ and for all
$n$-dimensional nonsingular projective varieties of general type.
\end{abstract}
\maketitle
\pagestyle{myheadings} \markboth{\hfill M. Chen\hfill}{\hfill
Pluricanonical systems\hfill}

\section{\bf Introduction}

One of the fundamental problems in birational geometry is to find a
constant $r_n>0$ such that the $r_n$-canonical map is an Iitaka
fibration for any $n$-dimensional projective variety with positive
Kodaira dimension. It is well-known that one may take $r_2=5$ (see
Bombieri \cite{Bom}) for surfaces of general type. The existence of
$r_n$ for varieties of general type was proved by
Hacon-M$^\text{c}$kernan \cite{H-M}, Takayama \cite{Taka} and Tsuji
\cite{Tsuji} and, recently, $r_3\leq 73$ was proved by Chen-Chen
\cite[Theorem 1.1]{Explicit_II}. Some other relevant results with
regard to the existence of $r_n$ have been already proved by
Chen-Hacon \cite{C-H}, Pacienza \cite{Pa} and Viehweg-Zhang
\cite{Viehweg-Zhang}. The following problem, however, is still open:

\begin{OP}\label{OP} To find an explicit constant $\mu_n$ ($n\geq 4$)
such that the m-canonical map $\varphi_m$ is birational onto its
image for all $m\geq \mu_n$ and for all $n$-dimensional projective
varieties of general type.
\end{OP}

Let $V$ be a $n$-dimensional nonsingular projective variety of
general type. Denote by $K_V$ a canonical divisor on $V$. A
reasonable strategy for studying Problem \ref{OP} is composed of two
steps:
\begin{itemize}
\item[[a]] To find a positive integer $m_0$ such that
$h^0(V, m_0K_V)\geq 2$;

\item[[b]] To find an explicit function $g(m_0,n)$ such
that the $m$-canonical map $\varphi_m$ is birational for all $m\geq
g(m_0,n)$.
\end{itemize}

This strategy works well in dimension 3 (see, for instance,
\cite{Explicit_I, Explicit_II}). In fact, since Reid \cite{YPG} has found the Riemann-Roch formula for minimal 3-folds, we \cite[Theorem 1.1]{Explicit_I} managed to prove $h^0(V, m_0K_V)\geq 2$ by utilizing that formula. On the other hand, Koll\'ar \cite{Kol} and myself \cite{JPAA} have an effective formula in dimension 3 for Step (b).
Generally, for Step [a], since the
classification to 4-dimensional terminal singularities is still
incomplete, there is no known Riemann-Roch formula for $\chi(mK)$
on minimal varieties. Thus to compute
$P_m:=h^0(V, m_0K_V)$ is still expected.  Though Koll\'ar \cite{Kol}
has essentially solved Step [b], what we are more concerned here is the stable birationality, to be worked out by an improved and
generalized technique, of linear systems $|mK+\roundup{Q}|$ where
$Q$ is any nef $\bQ$-divisor. Hence this paper can be regarded as a
remark or complementary to Koll\'ar's method. We will build up some
new results about induced fibrations from pluricanonical systems.

Assume $P_{m_0}(V):=h^0(V, \OO_V(m_0K_V))\geq 2$ for some positive
integer $m_0$. Set $\varphi_{m_0}:=\Phi_{|m_0K_V|}$, the
$m_0$-canonical map of $V$. We define {\it the $m_0$-canonical
dimension} $\iota:=\dim \overline{\varphi_{m_0}(V)}$. Clearly $1\leq
\iota\leq n$. In order to formulate our statements, we introduce the
following:

\begin{defn} Define $\lambda(V)$ to be the smallest positive integer such that
$P_{\lambda(V)}(V)\geq 2$ for a given $n$-dimensional projective
variety $V$ of general type. Define
$\lambda_n:=\text{sup}\{\lambda(V)| \dim(V)=n\}$. 
\end{defn}

\begin{rem} According to Hacon-M$^\text{c}$kernan \cite{H-M}, Takayama \cite{Taka} and Tsuji
\cite{Tsuji}, one knows
$\lambda_n<+\infty$. Therefore an assumption like
$P_{m_0}\geq 2$ is reasonable and natural.
\end{rem}

The main result of this paper is the following which, at least,
induces new results for the case $n=4$:

\begin{thm}\label{main}
Let $V$ be a $n$-dimensional ($n\geq 3$) nonsingular projective
variety of general type. Let $Q$ be a nef $\bQ$-divisor on $V$.
Assume $P_{m_0}\geq 2$ for some positive integer $m_0$.  Then the
linear system $|mK_V+\roundup{Q}|$ defines a birational map onto its
image for all integers $m\geq \varepsilon(\iota)$ where
$\varepsilon(\iota)$ is a function as follows:
\begin{itemize}
\item[(1)] when $\iota\geq n-2$, $\varepsilon(\iota)=\text{min}\{4m_0+4,
57\}\cdot(2m_0+1)^{n-3}+m_0(n-2)+2$;

\item[(2)] when $\iota=n-3$, $\varepsilon(\iota)=75(2m_0+1)^{n-3}+m_0(n-3)+2$;

\item[(3)] when $\iota\leq n-4$, $\varepsilon(\iota)=(2m_0+1)^{\iota-1}
w_{n-\iota+1}+m_0(\iota-1)+2$ where $w_{n-\iota+1}$ can be obtained
by the number sequence $\{w_t\}_{t=4}^{n-\iota+1}$ with
$w_i=\widetilde{\lambda}_i+w_{i-1}(2\widetilde{\lambda}_i+1)$,
$w_4=151\widetilde{\lambda}_4+75$,
$\widetilde{\lambda}_{n-\iota+1}=m_0$ and, for all other $i$,
$\widetilde{\lambda}_i=\lambda_i$.
\end{itemize}
By taking $Q=0$, $\varphi_m$ is birational for all integers $m\geq
\varepsilon(\iota)$.
\end{thm}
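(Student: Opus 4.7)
The plan is to adapt Koll\'ar's section-lifting technique, using Kawamata--Viehweg vanishing and induction on the dimension of the fibration induced by $|m_0 K_V|$.

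First, I would pass to a smooth modification $\pi : V' \to V$ on which $\varphi_{m_0}$ becomes a genuine morphism, and then Stein-factorize to obtain a fibration $f : V' \to W$ with $\dim W = \iota$ and connected fibres; let $F$ denote a general fibre, nonsingular of dimension $n-\iota$ and of general type. Writing $m_0 K_{V'} \sim M + E$ with $M$ the moving part and $E$ effective, the assumption $P_{m_0}\ge 2$ guarantees that $M$ dominates, up to base change, the sum of at least $P_{m_0}-1$ fibres of $f$, yielding a positive nef $\bQ$-divisor on $V'$ that I can couple with $K_{V'}$ through the vanishing theorem.

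To prove birationality of $|mK_V+\roundup{Q}|$ for the stated $m$, I would separate two tasks: (a) distinguishing two general points lying in different fibres of $f$, and (b) distinguishing two general points lying in one general fibre $F$. Task (a) is handled by pulling back a very ample divisor from $W$ and tensoring, so that sections of $|mK_V + \roundup{Q}|$ can be forced to vanish on a chosen union of fibres. Task (b) is reduced, via Kawamata--Viehweg vanishing applied to a suitable decomposition $mK_{V'}+\roundup{Q}- F \equiv K_{V'} + (\text{nef and big})$, to showing that the restricted system $|(mK_V+\roundup{Q})|_F|$ is birational on $F$. This restriction is itself of the form $|\mu K_F + \roundup{Q'|_F}|$ for a nef $\bQ$-divisor $Q'|_F$, so we have strictly decreased the ambient dimension, which is exactly the engine of the induction.

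For cases (1) and (2), the fibre $F$ has dimension at most $3$, so I would close the induction with the known bounds: Bombieri's $5K$ theorem and Koll\'ar's $11K$-type bound for surfaces, and the Chen--Chen bound $r_3\le 73$ for $3$-folds. The competing estimates $4m_0+4$ versus $57$ in case (1) reflect a choice between the naive Koll\'ar bound and a refined surface-fibre estimate, and taking the minimum is straightforward. For case (3), I would iterate the fibration construction on $F$ using the number $\lambda_i$ at each dimension: applying $|\lambda_i K|$ gives a pencil in dimension $i$ that peels off one more coordinate and reduces to dimension $i-1$ with bound $w_{i-1}$; the recurrence $w_i = \widetilde{\lambda}_i + w_{i-1}(2\widetilde{\lambda}_i+1)$ records the additive cost $\widetilde{\lambda}_i$ of forming this pencil and the multiplicative factor $2\widetilde{\lambda}_i+1$ needed to enlarge the boundary so that the relevant round-up stays both integral and nef.

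The main obstacle I anticipate is the bookkeeping of $\bQ$-divisors through the inductive descent. Each vanishing-theorem step requires subtracting a small rational boundary that must be absorbed into $\roundup{Q}$ without destroying nefness or bigness; doing this simultaneously with the pencil-enlargement in every dimension, and verifying that the resulting integer thresholds match the stated sequence $w_i$ (together with the opening term $w_4 = 151\widetilde{\lambda}_4 + 75$, which is forced by using case (2) as the induction base at dimension $4$), is where the technical work of the proof will concentrate.
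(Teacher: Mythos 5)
Your outline reproduces the broad shape of the argument (pencil/fibration from $|m_0K|$, Kawamata--Viehweg to separate fibres and to restrict to a fibre, induction on dimension), but it is missing the one quantitative ingredient that actually drives all the constants, and it closes the induction with the wrong base cases. The restriction of $mK_{V'}+\roundup{Q}$ to a general fibre $F$ is \emph{not} ``of the form $|\mu K_F+\roundup{Q'|_F}|$'': what you get on $F$ is $K_F+\roundup{q\,\pi^*(K_X)|_F+\cdots}$, and $\pi^*(K_X)|_F$ is a nef and big $\bQ$-divisor that is a priori much smaller than $\sigma^*(K_{F_0})$. The paper's engine is the comparison
$$\pi^*(K_X)|_F\ \geq\ \tfrac{1}{2m_0+1}\,\sigma^*(K_{F_0}),$$
obtained from $\OO_B(1)\hookrightarrow f_*\omega_{X'}^{\otimes m_0}$, Viehweg semipositivity of $f_*\omega_{X'/B}^{\otimes m}$, and the Base Point Free Theorem on the relative minimal model (inequality (1) in Section 2; Lemma \ref{b>0} when $g(B)>0$, Lemma \ref{b=0} for the sharper constant $p/(m_0+p)$ used in dimension $3$). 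This is exactly where the multiplicative factor $2\widetilde{\lambda}_i+1$ in the recursion comes from; your explanation of it as a device ``so that the round-up stays integral and nef'' is not correct, and without some such comparison the inductive step does not produce a system on $F$ to which the lower-dimensional statement applies. Note also that the paper always fibres over a \emph{curve} via a pencil $\Lambda\subset|m_0K|$ (so $F$ has dimension $n-1$ and one descends one dimension at a time, keeping track of $\dim\Phi_{|M_0|_F|}(F)\geq\iota-1$), rather than Stein-factorizing the full $m_0$-canonical map onto an $\iota$-dimensional base.

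The second gap is in the base of the induction. Bombieri's $5K$ theorem and the Chen--Chen bound $r_3\leq 73$ are not the inputs here and cannot produce the constants $57$, $75$, $151$. What is needed are the $\bQ$-divisor statements: on surfaces, $|K_S+m\sigma^*(K_{S_0})+\roundup{Q_2}|$ is birational for $m\geq 4$ (resp.\ $m\geq 3$ if $p_g>0$) (Theorem \ref{surface}); on $3$-folds, $\Phi_{|K_{X'}+\roundup{q_3\pi^*(K_X)+Q_3'}+M_0|}$ is birational for $q_3>4m_0+4$ (resp.\ $>3m_0+3$ if the fibre has $p_g>0$) (Theorem \ref{3folds}); and the bounds $\lambda(F)\leq 18$ for type (I) and $\leq 10$ for type (II) $3$-folds (Lemma \ref{nu3}). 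For instance, $57=3\cdot 18+3+\ldots$ arises as $\max\{3\lambda+3+\lambda,\,4\lambda'+4+\lambda'\}$ over the two types, and $151m_0+75=(2m_0+1)\cdot 75+m_0$ with $75\geq 4\cdot 18+3$. Until you identify inequality (1) and these $\bQ$-divisor base cases, the numerical thresholds in the statement cannot be verified.
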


In particular we have the following:

\begin{cor}\label{general} Let $V$ be a nonsingular
projective n-dimensional ($n\geq 4$) variety of general type. Then
$\varphi_m$ is birational for all integers $m\geq w_n+2$ where $w_n$
is obtained by the number sequence $\{w_t\}_{t=4}^n$ with
$w_i=\lambda_i+w_{i-1}(2\lambda_i+1)$ and $w_4=151\lambda_4+75$.
\end{cor}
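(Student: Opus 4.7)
The plan is to obtain the corollary as a direct specialization of Theorem \ref{main} with $Q = 0$. Since $V$ is of general type, the Hacon-McKernan/Takayama/Tsuji result cited in the Remark above guarantees $\lambda(V) \leq \lambda_n < +\infty$, so I would set $m_0 := \lambda(V)$, which fulfills the hypothesis $P_{m_0}(V) \geq 2$. Theorem \ref{main} then yields birationality of $\varphi_m$ for every $m \geq \varepsilon(\iota)$, where $\iota$ is the $m_0$-canonical dimension of $V$.

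The substance of the proof is the uniform bound $\varepsilon(\iota) \leq w_n + 2$ across all $\iota \in \{1, \ldots, n\}$. I would first record the elementary monotonicity of the recursion $w_i = \lambda_i + w_{i-1}(2\lambda_i + 1)$, $w_4 = 151\lambda_4 + 75$: since every coefficient is positive, $w_n$ is increasing in each input, so substituting $\widetilde\lambda_i = m_0 \leq \lambda_n$ in place of the full $\lambda_i$ only decreases the value of the corresponding $w$-sequence appearing in Theorem \ref{main}.

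I would then handle the three case ranges of the theorem in turn. In case (3) with $\iota = 1$ the formula collapses to $\varepsilon(1) = w_n(\widetilde\lambda_n = m_0) + 2 \leq w_n + 2$ by monotonicity alone. For intermediate $\iota$ in case (3), the prefactor $(2m_0+1)^{\iota-1}$ and the additive term $m_0(\iota-1)$ encode exactly the first $\iota - 1$ telescoping steps of the $w_n$-recursion applied at the input $m_0$, so they are absorbed by the corresponding steps of the full recursion built from the $\lambda_i$. For $\iota \geq n-3$ (cases (1) and (2)), the constants $57$ and $75$ are dominated by $w_4 = 151\lambda_4 + 75$, and a short comparison using $m_0 \leq \lambda_n$ shows that $\min\{4m_0+4, 57\}(2m_0+1)^{n-3}$ and $75(2m_0+1)^{n-3}$ are bounded by the analogous product factors sitting inside $w_n$.

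The main (and essentially only) obstacle is this case-by-case arithmetic bookkeeping; conceptually, the quantity $w_n + 2$ is designed precisely to dominate every branch of $\varepsilon(\iota)$, so once monotonicity of the recursion and the base-case comparisons are in hand, the corollary follows from Theorem \ref{main} with no further geometric input.
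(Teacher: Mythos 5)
Your plan — set $m_0=\lambda(V)$, invoke the theorem, then compare the resulting threshold with $w_n+2$ — is the right starting point, but the case-by-case comparison you sketch does not close. The uniform inequality $\varepsilon(\iota)\le w_n+2$ is simply not implied by $m_0\le\lambda_n$. In cases (1) and (2) of Theorem~\ref{main}, one must compare $\min\{4m_0+4,57\}(2m_0+1)^{n-3}$ or $75(2m_0+1)^{n-3}$ with the nested product inside $w_n$, whose multiplicative part only contains the $n-4$ factors $2\lambda_i+1$ for $i=5,\dots,n$; and the sole inequality available is $m_0\le\lambda_n$, with nothing in the paper asserting $m_0\le\lambda_i$ for $i<n$ nor that $\lambda_i$ is monotone in $i$. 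Concretely, for $n=5$ and $\iota=2$ one has $\varepsilon(2)=75(2m_0+1)^2+2m_0+2$, while $w_5+2=\lambda_5+(151\lambda_4+75)(2\lambda_5+1)+2$; if $m_0=\lambda(V)=\lambda_5$ is large and $\lambda_4$ small, the former is of order $\lambda_5^2$ and the latter only of order $\lambda_4\lambda_5$, so $\varepsilon(2)>w_5+2$. The ``telescoping'' you invoke for intermediate $\iota$ in case (3) suffers from the same defect: absorbing the $\iota-1$ prefactor steps taken at input $m_0$ into the corollary's recursion built from the $\lambda_i$ requires $m_0\le\lambda_i$ for $i=n-\iota+2,\dots,n-1$, which is not available. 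Finally, the remark that one ``substitutes $\widetilde{\lambda}_i=m_0$ in place of the full $\lambda_i$'' misreads the definition: only $\widetilde{\lambda}_n$ is set to $m_0$; all $\widetilde{\lambda}_i$ with $i<n$ already equal $\lambda_i$, so the only substitution that ever occurs is at the top index.

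The corollary should instead be read directly off Theorem~\ref{d31}(3), the ingredient behind case (3) of Theorem~\ref{main}, which is stated only under the hypothesis $\iota\ge1$ and hence applies to every $n$-fold $V$ with $n\ge4$. Take $Q_n=0$ and $m_0=\lambda(V)$. The $w_n$ in that theorem is built with $\widetilde{\lambda}_i=\lambda_i$ for $i<n$ and $\widetilde{\lambda}_n=m_0$, so it differs from the corollary's $w_n$ only in the top input, where $m_0=\lambda(V)\le\lambda_n$. Since $w_n=\widetilde{\lambda}_n+w_{n-1}(2\widetilde{\lambda}_n+1)$ is increasing in $\widetilde{\lambda}_n$, the theorem's threshold $w_n+2$ is at most the corollary's, and the statement follows at once. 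No comparison with the branches $\varepsilon(\iota)$ for $\iota\ge n-3$ is needed; those branches only sharpen the bound when $\iota$ is large, and the universal $\iota\ge1$ bound is exactly what the corollary records after replacing $m_0$ by $\lambda_n$.
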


\begin{cor}\label{4-fold} (= Corollary \ref{4}) Let $V$ be a nonsingular
projective 4-fold of general type with $P_{m_0}\geq 2$ for some
integer $m_0>0$. Let $Q$ be any nef $\bQ$-divisor on $V$. Then
$\Phi_{|mK_V+\roundup{Q}|}$ (in particular, $\varphi_m$) is
birational for all $m\geq 151m_0+77.$
\end{cor}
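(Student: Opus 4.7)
The plan is to derive Corollary \ref{4-fold} directly from Theorem \ref{main} specialized to dimension $n=4$. Since the $m_0$-canonical dimension $\iota$ satisfies $1 \leq \iota \leq 4$, I would first observe that case (3) of the theorem, which requires $\iota \leq n-4 = 0$, is vacuous for $n=4$; hence only cases (1) and (2) contribute. The task then reduces to evaluating the bounds given by these two cases at $n=4$ and taking the maximum to obtain a threshold valid uniformly in $\iota$.

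For case (2) (i.e.\ $\iota = 1$), plugging in $n=4$ gives
\[
\varepsilon(1) \;=\; 75(2m_0+1)^{1} + m_0\cdot 1 + 2 \;=\; 151m_0 + 77.
\]
For case (1) (i.e.\ $\iota \in \{2,3,4\}$), one has
\[
\varepsilon(\iota) \;=\; \min\{4m_0+4,\,57\}\cdot(2m_0+1) + 2m_0 + 2 \;\leq\; 57(2m_0+1) + 2m_0 + 2 \;=\; 116m_0 + 59.
\]
Since $151m_0 + 77 > 116m_0 + 59$ for every positive integer $m_0$, the bound from case (2) strictly dominates, and Theorem \ref{main} already guarantees that $\Phi_{|mK_V+\roundup{Q}|}$ is birational for all integers $m \geq 151m_0 + 77$.

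There is no genuine obstacle in passing from the theorem to the corollary; the deduction is purely arithmetic case-checking. The conceptual point worth recording, however, is that the dominant contribution comes from the lowest possible $m_0$-canonical dimension $\iota = 1$, in which case $\varphi_{m_0}$ factors through a curve. This is consistent with the general heuristic that varieties whose pluricanonical images are of low dimension are the hardest to control effectively, and the coefficient $151 = 2\cdot 75 + 1$ in the final bound is a direct signature of case (2). All the substantive birational-geometric work—the inductive control of $|mK_V + \roundup{Q}|$ via the fibrations induced by $\varphi_{m_0}$—is absorbed into the proof of Theorem \ref{main} itself.
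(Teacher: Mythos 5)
Your deduction from Theorem \ref{main} is arithmetically and logically sound: for $n=4$ only cases (1) and (2) can occur, case (2) evaluates to $75(2m_0+1)+m_0+2=151m_0+77$, and this dominates the case (1) bound ($\leq 116m_0+59$), so the threshold is uniform in $\iota$. The paper, however, reaches the corollary by a more direct route: Corollary \ref{4-fold} is declared equal to Corollary \ref{4}, which is verbatim Theorem \ref{4-folds}(2), and that theorem is proved \emph{uniformly in $\iota$} by choosing an arbitrary pencil $\Lambda\subset|m_0K_V|$ (which exists as soon as $P_{m_0}\geq 2$, whatever $\iota$ is), passing to the induced fibration over a curve, and restricting to its threefold general fiber. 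So no case division on $\iota$ is needed at all. For the same reason your closing heuristic is somewhat misleading: the coefficient $151=2\cdot 75+1$ does not record that $\iota=1$ is the hardest case; it records the loss factor $2m_0+1$ in inequality (1) of Section 2 when restricting $\pi^*(K_X)$ to a fiber of a pencil, a loss incurred in Theorem \ref{4-folds} for every $\iota$. The sharper bounds of Theorem \ref{main}(1) for $\iota\geq 2$ come from a different induction (Theorem \ref{b1}) exploiting the larger $m_0$-canonical image; they are a refinement, not the source of the corollary. Both routes are legitimate, and yours costs only a harmless extra case check.
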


Theorem \ref{main} also implies the following:

\begin{cor} An explicit constant $\mu_n$ mentioned in Problem
\ref{OP} can be found by means of Theorem \ref{main} if and only if explicit constants $\rho_k$ for all $k\leq n$
can be found such that the pluri-genus $P_{\rho_k}\geq 2$ for all
$k$-dimensional projective varieties of general type.
\end{cor}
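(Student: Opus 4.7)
The plan is to reduce the statement to a direct inspection of the dependencies of $\varepsilon(\iota)$ in Theorem \ref{main}. Scanning the three cases, $\varepsilon(\iota)$ is a polynomial in $m_0$ (the integer for which $P_{m_0}\geq 2$ is assumed) and, in the small-$\iota$ regime $\iota\leq n-4$, in the constants $\lambda_i$ for $4\leq i\leq n-\iota$, which enter through the recursion $w_i=\widetilde\lambda_i+w_{i-1}(2\widetilde\lambda_i+1)$ with $w_4=151\widetilde\lambda_4+75$. Since $w_i$ is visibly monotone increasing in each of $w_{i-1}$ and $\widetilde\lambda_i$ (the partial derivatives $2\widetilde\lambda_i+1$ and $1+2w_{i-1}$ are positive), both implications of the biconditional will follow once this explicit dependence is laid bare.

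For the ``if'' direction, I would start from the hypothesized explicit constants $\rho_k$ ($k\leq n$) satisfying $P_{\rho_k}\geq 2$ on every $k$-dimensional variety of general type. By the definition of $\lambda_k$ this gives $\lambda_k\leq \rho_k$. Given an arbitrary $n$-dimensional $V$ of general type, set $m_0:=\rho_n$ (legal since $P_{\rho_n}(V)\geq 2$) and invoke Theorem \ref{main}. The monotonicity above shows that substituting $m_0=\rho_n$ and $\lambda_i\leq\rho_i$ in the formulas for $\varepsilon(\iota)$ yields an explicit upper bound in each of the three cases, so that
\[
\mu_n:=\max_{1\leq\iota\leq n}\varepsilon(\iota)
\]
is then an explicit constant answering Problem \ref{OP}.

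Conversely, suppose $\mu_n$ has been produced explicitly via Theorem \ref{main}. Because the theorem is applied uniformly to every $n$-dimensional $V$ of general type, one must first exhibit an explicit integer $m_0$ with $P_{m_0}(V)\geq 2$ for \emph{every} such $V$; the minimal universal such integer is $\lambda_n$, and bounding it explicitly is exactly the content of an explicit $\rho_n$. To then evaluate $\varepsilon(\iota)$ when $\iota\leq n-4$ the recursion forces explicit bounds on $\widetilde\lambda_i=\lambda_i$ for $4\leq i\leq n-\iota$; letting $\iota$ range over $\{1,\dots,n-4\}$ shows that explicit bounds are needed on every $\lambda_i$ with $4\leq i\leq n-1$. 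Combined with the classical values $\rho_1=1$, $\rho_2=2$ (Bombieri) and the known bound for $\rho_3$ (Chen--Chen), this produces explicit $\rho_k$ for every $k\leq n$. The argument carries no further geometric content beyond Theorem \ref{main}; the only point of care is confirming that each $\lambda_i$ enters the recursion irreducibly, so that varying $\iota$ genuinely forces the full list of constants, and this is transparent from the explicit form of $w_i$.
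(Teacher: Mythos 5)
Your proposal is correct and follows exactly the reasoning the paper intends: the corollary is stated without proof as an immediate consequence of inspecting how $\varepsilon(\iota)$ depends on $m_0$ and the constants $\lambda_i$ through the monotone recursion for $w_i$, which is precisely what you make explicit in both directions. No gaps; your elaboration (substituting $\lambda_k\leq\rho_k$, taking the maximum over $\iota$, and noting that the case $\iota\leq n-4$ forces explicit bounds on $\lambda_4,\dots,\lambda_{n-1}$ while $\rho_1,\rho_2,\rho_3$ are classical) is exactly the content the author leaves tacit.
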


{}From this point of view, a Riemann-Roch formula for
$\chi(\OO(mK))$ is of key importance just like what Reid has done
in \cite[last section]{YPG} for threefolds.

This paper is organized as follows. First we fix the notation for
the map $\varphi_{m_0}$. Then we systematically study the property,
of the induced fibration $f\colon X'\lra B$, which generalizes known
inequalities in 3-dimensional case. In Section 3, we will improve
known results on surfaces and 3-folds. Theorems in Section 4 are
original. We will prove the main theorem by induction in the last
section.

We always use the symbol $\equiv$ to denote numerical equivalence while $\sim$
means linear equivalence.

\section{\bf Properties of canonically induced fibrations}

In this paper, $V$ is always a $n$-dimensional nonsingular
projective variety of general type. Let $m_0$ be a positive integer.
Assume that $\Lambda\subset |m_0K_V|$ is a sub-linear system such
that $\Phi_{\Lambda}(V)=1$ where $\Phi_{\Lambda}$ is the rational
map defined by $\Lambda$. We call $\Lambda$ a {\em pencil} contained
in $|m_0K_V|$. Note that such a pencil always exists. For instance,
a 2-dimensional sub-space of $H^0(V, m_0K_V)$ corresponds to a
pencil. We will study properties of the rational map
$\Phi_{\Lambda}$ in this section.

\begin{setup}{\bf Existence of minimal models.}  By recent works of
Birkar-Cascini-Hacon-M$^\text{c}$kernan \cite{BCHM},
Hacon-M$^\text{c}$kernan\cite{HM2} and Siu \cite{Siu}, $V$ has a
minimal model. Again by \cite{HM2}, any fibration $f\colon Y\lra B$,
from a nonsingular variety $Y$ of general type to a smooth curve
$B$, has a relative minimal model. According to the established
Minimal Model Program (MMP), one may always assume that a minimal
model has at worst $\bQ$-factorial terminal singularities.

{}From this point of view, it suffices to study $\varphi_m$ on
minimal models.
\medskip

{\bf ($\ddag$) Throughout $X$ always denotes a minimal model of
$V$.}
\medskip

Because $\Phi_{\Lambda}$ can be defined on a Zariski open subset of
$X$, we may also regard $\Lambda$ as a {\em pencil} on the minimal
model $X$.
\end{setup}

\begin{setup}\label{setup}{\bf Set up for $\Phi_{\Lambda}$.} Denote
by $\mu\colon V\dashrightarrow X$ the birational contraction map.
Because $P_{m_0}(X)=P_{m_0}(V)\geq 2$, we may fix an effective Weil
divisor $K_{m_0}\sim m_0K_X$ on $X$ and a divisor
$\widetilde{K}_{m_0}\sim m_0K_V$ on $V$. Take successive blow-ups
$\pi\colon X'\rightarrow X$ along nonsingular centers, such that the
following conditions are satisfied:
\medskip
\begin{itemize}
\item[(i)] $X'$ is smooth;

\item[(ii)] there is a birational morphism $\pi_{V}\colon X'\rightarrow V$
such that $\mu\circ \pi_V=\pi$;

\item[(iii)] the movable part $M_{0}$ of $\pi^*_{V}(\Lambda)$ is base
point free and so that $g:=\Phi_{\Lambda}\circ\pi_{V}$ is a
non-constant morphism;

\item[(iv)] $\pi^*(K_{m_0})
\cup\pi^*_{V}(\widetilde{K}_{m_0})$ has simple normal crossing
supports;

\item[(v)] for certain purpose $\pi$ even satisfies a couple of extra
conditions by further modifying $X'$. (This condition will be
specified in explicit whenever we need it.)
\end{itemize}
\medskip

We have a morphism $g\colon X'\longrightarrow W'\subseteq{\mathbb
P}^{N}$. Let $X'\overset{f}\longrightarrow
B\overset{s}\longrightarrow W'$ be the Stein factorization of $g$.
We have the following commutative diagram:\medskip

\begin{picture}(50,80) \put(100,0){$V$} \put(100,60){$X'$}
\put(170,0){$W'$} \put(170,60){$B$} \put(112,65){\vector(1,0){53}}
\put(106,55){\vector(0,-1){41}} \put(175,55){\vector(0,-1){43}}
\put(114,58){\vector(1,-1){49}} \multiput(112,2.6)(5,0){11}{-}
\put(162,5){\vector(1,0){4}} \put(133,70){$f$} \put(180,30){$s$}
\put(92,30){$\pi_{V}$}
\put(135,-8){$\varphi_{\Lambda}$}\put(136,40){$g$}
\end{picture}
\bigskip

Denote by $r(X)$ the Cartier index of $X$. We can write
$r(X)K_{X'}=\pi^*(r(X)K_X)+E_{\pi}$ where $E_{\pi}$ is a sum of
exceptional divisors. Recall that
$$\pi^*(K_X):=K_{X'}-\frac{1}{r(X)}E_{\pi}.$$ Clearly, whenever we
take the round-up of $m\pi^*(K_X)$ for $m>0$, we always have
$\roundup{m\pi^*(K_X)}\leq mK_{X'}.$

Denote by $M_{k, X'}$ the movable part of $|kK_{X'}|$ for any
positive integer $k>0$. We may write $m_0K_{X'}=_{\mathbb
Q}\pi^*(m_0K_X)+E_{\pi, m_0}=M_{m_0,X'}+Z_{m_0},$ where $Z_{m_0}$
is the fixed part of $|m_0K_{X'}|$ and $E_{\pi, m_0}$ an effective
${\mathbb Q}$-divisor which is a ${\mathbb Q}$-sum of distinct
exceptional divisors with regard to $\pi$.

Since $M_{0}\le M_{m_0,X'}\leq \pi^*(m_0K_X)$, we can write
$\pi^*(m_0K_X)=M_{0}+E_{\Lambda}'$ where $E_{\Lambda}'$ is an
effective $\mathbb{Q}$-divisor. By our assumption ($\Lambda$ is a
pencil), $B$ is a nonsingular complete curve. By Bertini's theorem,
a general fiber $F$ of the fibration $f\colon X'\lra B$ is a
$(n-1)$-dimensional nonsingular projective variety of general type.

Once a fibration $f\colon X'\lra B$ is obtained, we may take the
relative minimal model $f_0\colon X_0\lra B$ of $f$. Then we can
remodify $\pi$ and $\pi_V$ again such that a new birational model
$X''$ dominates $X'$ and $X_0$. Namely assume $\pi'\colon X''\lra
X'$ and $\pi''\colon X''\lra X_0$ are the birational morphisms, set
$f'':=f\circ \pi'$, then $f''=f_0\circ \pi''$.
\end{setup}

Therefore we can make the following:

\begin{setup}\label{aasum}{\bf Assumption on $X'$}. To avoid
too complicated notations, we may assume from the beginning that
$X'=X''$ by further birational modifications, i.e. there is a
contraction morphism $\theta:X'\lra X_0$ such that $f=f_0\circ
\theta$ and that $f_0$ is a relative minimal model of $f$.

With this assumption, we pick up a general fiber $F_0$ of $f_0$ and
set $\sigma:=\theta|_F$, then $\sigma\colon F\lra F_0$ is a
birational morphism onto the minimal model.
\end{setup}

Set $b:=g(B)$. We will study the geometry of $f$ according to the
value of $b$. In fact there are two cases:

\begin{itemize}
\item
[(i)] $b>0$, $M_0\sim\underset{i=1}{\overset{p}\sum} F_i\equiv pF$
where the $F_i$'s are different smooth fibers of $f$ for all $i$ and
$p\geq 2$;
\item
[(ii)] $b=0$, $M_0\sim pF\leq m_0\pi^*(K_X)$ with $p\geq 1$.
\end{itemize}

\begin{setup}\label{reduction}{\bf Reduction to problems on $X'$.} As we have seen, there is a
birational morphism $\pi_V\colon X'\lra V$. Let $m$ be a positive
integer and $Q$ a nef $\bQ$-divisor on $V$. Since
$${\pi_V}_*\OO_{X'}(mK_{X'}+\pi_V^*(\roundup{Q}))\cong\OO_V(mK_V+\roundup{Q})$$
and $mK_{X'}+\pi_V^*(\roundup{Q})\geq mK_{X'}+\roundup{Q'}$ where
$Q':=\pi_V^*(Q)$ is nef on $X'$, the birationality of
$\Phi_{|mK_{X'}+\roundup{Q'}|}$ implies that of
$\Phi_{|mK_V+\roundup{Q}|}$. Furthermore the fact $mK_{X'}\geq
m\pi^*(K_X)$ allows us to consider a smaller linear system on $X'$
like:
$$|K_{X'}+\roundup{(m-1)\pi^*(K_X)+Q'}|.$$
\end{setup}

The 3-dimensional version of the next lemma has appeared as
\cite[Lemma 3.4]{Chen-Zuo}.

\begin{lem}\label{b>0} Let $f_Y\colon Y\dashrightarrow B_0$ be
a rational map onto a smooth curve $B_0$ where $Y$ is a normal
projective minimal variety (i.e. $K_Y$ nef) with at worst terminal
singularities. Let $\pi_Y\colon Y'\rightarrow Y$ be any birational
modification from a nonsingular projective model $Y'$ such that
$g_Y:=f_Y\circ \pi_Y\colon Y'\longrightarrow B_0$ is a proper
morphism. Denote by $F_b$ any irreducible component in a general
fiber of $g_Y$. Assume $g(B_0)>0$. Then
$$\OO_{F_b}(\pi_Y^*(K_Y)|_{F_b})\cong \OO_{F_b}({\sigma'}^*(K_{F_{b,0}}))$$
where $F_{b,0}$ is a minimal model of $F_b$ and there is a
contradiction morphism $\sigma'\colon F_b\lra F_{b,0}$.
\end{lem}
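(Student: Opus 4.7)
The plan is to combine adjunction on $Y'$ along the fibration $g_Y$ with the discrepancy formula on $Y$, and then identify the resulting nef summand in the decomposition of $K_{F_b}$ with $\sigma'^*(K_{F_{b,0}})$ via uniqueness of minimal models for varieties of general type.

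First I would compute $K_{Y'}|_{F_b}$ by adjunction. Since $F_b$ is (a general smooth component of) a fiber of the morphism $g_Y$, the normal bundle $\OO_{Y'}(F_b)|_{F_b}$ equals $(g_Y|_{F_b})^*\OO_{B_0}(b)$, which is trivial because $g_Y|_{F_b}$ is constant; hence adjunction gives $K_{Y'}|_{F_b}\equiv K_{F_b}$. Writing $K_{Y'}\equiv \pi_Y^*(K_Y)+E_{\pi_Y}$ with $E_{\pi_Y}$ an effective $\pi_Y$-exceptional $\bQ$-divisor (by the terminal hypothesis on $Y$), restriction to $F_b$ yields
\[
K_{F_b}\equiv \pi_Y^*(K_Y)|_{F_b}+E_{\pi_Y}|_{F_b},
\]
where $\pi_Y^*(K_Y)|_{F_b}$ is nef (pullback of a nef divisor) and $E_{\pi_Y}|_{F_b}$ is effective (for general $b$ the fiber meets every component of $E_{\pi_Y}$ properly).

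Next I would argue that $G_b:=\pi_Y(F_b)\subset Y$ is already a minimal model of $F_b$. The assumption $g(B_0)>0$ enters here: a rational map from a projective variety smooth in codimension two to a curve of positive genus is in fact a morphism, since any exceptional divisor of a resolution is covered by rational curves and $B_0$ contains none. Thus $f_Y\colon Y\to B_0$ is regular, $G_b$ is a bona-fide Weil divisor, and $G_b\equiv f_Y^*(b')$ for $b'\neq b$ implies $G_b|_{G_b}\equiv 0$, so adjunction on $Y$ gives $K_{G_b}\equiv K_Y|_{G_b}$, which is nef. A Bertini-type argument (using that $\mathrm{Sing}(Y)$ has codimension $\geq 3$, so $G_b$ is smooth in codimension one, combined with inversion of adjunction) shows $G_b$ has canonical singularities. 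Hence $G_b$ is a minimal model of $F_b$, and by the (codimension-one) uniqueness of minimal models for varieties of general type, $\pi_Y|_{F_b}$ is identified with $\sigma'$ under $G_b\cong F_{b,0}$. Pulling back $K_{G_b}\equiv K_Y|_{G_b}$ via $\sigma'=\pi_Y|_{F_b}$ gives
\[
\sigma'^*(K_{F_{b,0}})\equiv \pi_Y^*(K_Y)|_{F_b},
\]
which, together with the effectivity of $E_{\pi_Y}|_{F_b}$ matching $E_{\sigma'}$, yields the asserted isomorphism.

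The main obstacle is the singularity step: showing that the general fiber $G_b$ of $f_Y$ inherits canonical singularities from the terminal variety $Y$. This needs the fact that terminal singularities sit in codimension $\geq 3$ (so $G_b$ is smooth in codimension one) together with an inversion-of-adjunction argument to upgrade R$_1$ to canonicity. Everything else reduces to routine adjunction and the uniqueness of minimal models in the birational class of $F_b$.
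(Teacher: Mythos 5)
Your overall route is the same as the paper's: use $g(B_0)>0$ to descend $f_Y$ to an honest morphism on $Y$, observe that its general fiber is a minimal model with $K$ cut out by $K_Y$, and pull back. The paper simply \emph{defines} $F_{b,0}$ to be the general fiber of the Stein factorization $f_0\colon Y\to B'$ and $\sigma':=\pi_Y|_{F_b}$, so it never needs uniqueness of minimal models; your variant, which starts from an abstract minimal model of $F_b$ and identifies it with $\pi_Y(F_b)$ via codimension-one uniqueness, is heavier and quietly imports a general-type hypothesis on $F_b$ that is not in the statement of the lemma (though it holds in every application in the paper). Similarly, your Bertini/inversion-of-adjunction discussion of the singularities of the general fiber is a legitimate way to make precise what the paper leaves implicit.

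The genuine problem is your justification of the descent step. You claim that a rational map from a projective variety \emph{smooth in codimension two} to a curve of positive genus is automatically a morphism because ``any exceptional divisor of a resolution is covered by rational curves.'' That is false at this level of generality: for the projective cone $Y$ over a polarized abelian surface $A=E\times E'$, $Y$ is normal with one isolated singular point, the blow-up of the vertex has exceptional divisor $A$ (containing no rational curves), and projection to $E$ gives a rational map $Y\dashrightarrow E$ to a genus-one curve that does not extend over the vertex. What actually makes the step work is the \emph{terminal} hypothesis on $Y$: by Shokurov and Hacon--M$^{\text{c}}$Kernan, the fibers of any resolution $\pi_Y\colon Y'\to Y$ of a klt (in particular terminal) variety are rationally chain connected, hence are contracted by $g_Y$ since $B_0$ carries no rational curves; one then uses normality of $Y$ (via the graph in $Y\times B_0$, i.e.\ Zariski's main theorem) to conclude that the induced set-theoretic factorization is a morphism $Y\to B_0$. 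This rational chain connectedness is the one nontrivial input to the lemma, so your proof needs to cite it (or an equivalent) rather than the codimension of $\mathrm{Sing}(Y)$; with that repair, the rest of your argument goes through.
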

\begin{proof} One has a morphism
$g_Y\colon Y'\longrightarrow B_0$.  By theorems of Shokurov
\cite{Sho} and Hacon-M$^\text{c}$kernan \cite{HsM}, each fiber of
$\pi_Y\colon Y'\longrightarrow Y$ is rationally chain connected.
Therefore, $g_Y(\pi_Y^{-1}(y))$ is a point for all $y\in Y$.
Considering the image $G\subset (Y \times B_0)$ of $Y'$ via the
morphism $(\pi_Y\times g_y)\circ \triangle_{Y'}$ where
$\triangle_{Y'}$ is the diagonal map $Y'\longrightarrow Y'\times
Y'$, one knows that $G$ is a projective variety. Let $g_1\colon
G\longrightarrow Y$ and $g_2\colon G\longrightarrow B_0$ be two
projection maps. Since $g_1$ is a projective morphism and even a
bijective map, $g_1$ must be both a finite morphism of degree 1 and
a birational morphism. Since $Y$ is normal, $g_1$ must be an
isomorphism. So $g_Y$ factors as $f_1 \circ \pi_Y$ where
$f_1:=g_2\circ g_1^{-1}\colon Y \rightarrow B_0$ is a well-defined
morphism. Let $Y\overset{f_0}\longrightarrow
B'\overset{s'}\longrightarrow B_0$ be the Stein factorization of
$f_1$. Set $f':=f_0\circ \pi_Y$. Then $F$ is a general fiber of
$f'$. Denote by $F_{b,0}$ a general fiber of $f_0$. Clearly
$K_{F_{b,0}}\sim {K_Y}|_{F_{b,0}}$ is nef and so $F_{b,0}$ is
minimal. So it is clear that $\pi_Y^*(K_Y)|_{F_b}\sim
\sigma'^*(K_{F_{b,0}})$ where we set $\sigma':= {\pi_Y}|_{F_b}\colon
F_b\lra F_{b,0}$.
\end{proof}

The above lemma clearly applies to our situation with $b>0$.
\medskip

Now we begin to study the case $b=0$. According to our definition,
$M_{m,F}$ denotes the movable part of $|mK_F|$ for any $m>0$.

Since $B\cong \bP^1$, one has $\OO_B(1)\hookrightarrow
f_*\omega_{X'}^{\otimes m_0}$. Then there is the inclusion:
$$f_*\omega_{X'/B}^{\otimes m}\hookrightarrow f_*\omega_{X'}^{\otimes
m(2m_0+1)}$$ for all integers $m>0$. Because
$f_*\omega_{X'/B}^{\otimes m}$ is semi-positive (see Viehweg
\cite{V}) and is thus a direct sum of line bundles of non-negative
degree, so it is generated by global sections. Therefore any local
section of $f_*\omega_{X'/B}^{\otimes m}$ can be extended to a
global one of $f_*\omega_{X'}^{\otimes m(2m_0+1)}$. This already
means
$$m(2m_0+1)\pi^*(K_X)|_F\geq M_{m(2m_0+1),X'}|_F\geq M_{m,F}.$$
Whenever $m$ is divisible by the Cartier index of $F_{0}$, the Base
Point Free Theorem says that $M_{m,F}\sim m\sigma^*(K_{F_0})$. Thus
we get
$$\pi^*(K_X)|_F\geq
\frac{1}{m(2m_0+1)}M_{m(2m_0+1),X'}|_F\geq
\frac{1}{2m_0+1}\sigma^*(K_{F_0}). \eqno{(1)}$$

\begin{lem}\label{b=0} Keep the same notation as in \ref{setup}.
Assume $b=0$ and that a general fiber of $f$ has a Gorenstein
minimal model. Then there exists a sequence of positive rational
numbers $\{\beta_t\}$, with $\beta_t<\frac{p}{m_0+p}$ and
$\beta_t\underset{t\mapsto +\infty}\mapsto \frac{p}{m_0+p}$, such
that
$$\pi^*(K_X)|_F-\beta_t \sigma^*(K_{F_0})$$
is $\bQ$-linearly equivalent to an effective $\bQ$-divisor for all
integers $t>0$.
\end{lem}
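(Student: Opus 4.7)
I would combine Viehweg's semi-positivity of the relative pluricanonical sheaves with the sub-line-bundle of $f_*\omega_{X'}^{\otimes m_0}$ arising from $M_0$. First I would repackage the hypothesis on $B$: since $b=0$, $B\cong\bP^1$ and the base-point-free pencil $|M_0|$ is pulled from $B$, so $M_0=f^*D$ for an effective degree-$p$ divisor $D$; hence $M_0\sim pF=f^*\OO_B(p)$. Combined with $M_0\le m_0\pi^*(K_X)$, this furnishes a sheaf inclusion $\OO_B(p)\hookrightarrow f_*\OO_{X'}(m_0\pi^*(K_X))\hookrightarrow f_*\omega_{X'}^{\otimes m_0}$, equivalently: $m_0\pi^*(K_X)-pF$ is $\bQ$-effective on $X'$.

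Next, for each $t>0$ I would construct an inclusion
\[
f_*\omega_{X'/B}^{\otimes a_t}\hookrightarrow f_*\omega_{X'}^{\otimes N_t}
\]
for a suitable $a_t$ (divisible by the Cartier index of $F_0$) and $N_t$, by tensoring with high powers of $\OO_B(p)\hookrightarrow f_*\omega_{X'}^{\otimes m_0}$ and using the multiplication $(f_*\omega_{X'}^{\otimes m_0})^{\otimes k}\to f_*\omega_{X'}^{\otimes km_0}$ to absorb the twist $\omega_B^{-a_t}=\OO_B(2a_t)$. By Viehweg \cite{V}, $f_*\omega_{X'/B}^{\otimes a_t}$ is globally generated on $\bP^1$, so every section of $|a_tK_F|$ on a general fibre $F$ extends globally to $\omega_{X'/B}^{\otimes a_t}$ and, via the inclusion, embeds into $H^0(X',N_tK_{X'})$. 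The Gorenstein hypothesis on $F_0$ together with the Base Point Free Theorem yields $M_{a_t,F}\sim a_t\sigma^*(K_{F_0})$ for $a_t$ sufficiently divisible, and restricting to $F$ gives the class inequality $M_{N_t,X'}|_F\ge M_{a_t,F}$ of effective $\bQ$-divisors. Combined with $M_{N_t,X'}\le \pi^*(N_tK_X)$, this produces $N_t\pi^*(K_X)|_F\ge a_t\sigma^*(K_{F_0})$, so the sequence $\beta_t:=a_t/N_t$ does the job provided one can arrange $\beta_t<\tfrac{p}{m_0+p}$ and $\beta_t\to\tfrac{p}{m_0+p}$.

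The main obstacle is arranging $\beta_t\to\tfrac{p}{m_0+p}$ strictly from below. A direct cancellation of $\OO_B(2a_t)$ by $\lceil 2a_t/p\rceil$ copies of $\OO_B(p)$ only produces $N_t\approx a_t(1+2m_0/p)$, giving the weaker limit $\tfrac{p}{p+2m_0}$. To reach the sharp ratio $\tfrac{p}{p+m_0}$ one must almost certainly exploit the full two-dimensional pencil structure $\Lambda$, or invoke Viehweg's fibre-product trick to enhance the positivity of the push-forward before performing the exchange. This is the delicate combinatorial step at the heart of the argument, and should mirror the treatment used in the three-dimensional analogue \cite[Lemma 3.4]{Chen-Zuo}.
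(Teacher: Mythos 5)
Your Phase~1 reconstruction (the inclusion $\OO_B(p)\hookrightarrow f_*\omega_{X'}^{\otimes m_0}$, the exchange $f_*\omega_{X'/B}^{\otimes t_0p}\hookrightarrow f_*\omega_{X'}^{\otimes t_0p+2t_0m_0}$ via Viehweg semi-positivity, and the base-point-free theorem on $F_0$) matches the paper exactly and correctly yields the initial ratio $\beta=\tfrac{p}{p+2m_0}$. You are also right that this does \emph{not} reach the claimed limit $\tfrac{p}{p+m_0}$, and you flag this as the unresolved gap. However, the fix you suggest (a fibre-product/positivity enhancement of the push-forward, or using the full $2$-dimensional pencil) is not what closes the gap, and I don't see how it would: the push-forward $f_*\omega_{X'/B}^{\otimes m}$ is already globally generated on $\bP^1$, so the bottleneck is not positivity but the exchange rate between $\omega_B^{-1}=\OO_B(2)$ and $\OO_B(p)\hookrightarrow f_*\omega_{X'}^{\otimes m_0}$, which the sheaf-theoretic route seems hard to improve.

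The missing idea is an \emph{iterative bootstrap by Kawamata--Viehweg vanishing}, not a positivity argument. Starting from $a_0\pi^*(K_X)|_F\ge b_0\sigma^*(K_{F_0})$ with $(a_0,b_0)=(t_0p+2t_0m_0,\,t_0p)$, one applies vanishing to get
$$H^0\bigl(X',K_{X'}+\roundup{a_t\pi^*(K_X)}+F\bigr)\twoheadrightarrow H^0\bigl(F,K_F+\roundup{a_t\pi^*(K_X)}|_F\bigr)\supset H^0\bigl(F,(b_t+1)\sigma^*(K_{F_0})\bigr),$$
so the movable part $M'_{a_t+1,X'}$ of $|K_{X'}+\roundup{a_t\pi^*(K_X)}+F|$ satisfies $M'_{a_t+1,X'}|_F\ge(b_t+1)\sigma^*(K_{F_0})$. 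One then feeds $M'_{a_t+1,X'}+F$ back into the vanishing theorem and repeats; after $u=p-1$ such steps one has added $p$ copies of $F$ and $p$ copies of $K_{X'}$, and the crucial point is that $pF\sim M_0\le m_0K_{X'}$, so the whole linear system sits inside $|(a_t+p+m_0)K_{X'}|$ while the restriction to $F$ has grown by $p\sigma^*(K_{F_0})$. This gives the recursion $a_{t+1}=a_t+p+m_0$, $b_{t+1}=b_t+p$, whence $\beta_t=b_t/a_t\to\tfrac{p}{p+m_0}$ from below. The mechanism is an extension-of-sections argument, not a Viehweg-style positivity amplification, and that is the idea your proposal does not supply.
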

\begin{proof} When $n=\dim(V)=3$, this is nothing but Theorem
\cite[Lemma 3.3]{Chen-Zuo}. Here we generalize it for any $n$. One
has $\mathcal {O}_{B}(p)\hookrightarrow {f}_*\omega_{X'}^{m_0}$ and
therefore ${f}_*\omega_{X'/B}^{t_0p}\hookrightarrow
{f}_*\omega_{X'}^{t_0p+2t_0m_0}$ for any positive integer $t_0$.

Note that ${f}_*\omega_{X'/B}^{t_0p}$ is generated by global
sections since it is semi-positive according to Viehweg \cite{V}. So
any local section can be extended to a global one. On the other
hand, whenever $t_0$ is bigger, $|t_0p\sigma^*(K_{F_0})|$ is free by
Base Point Free Theorem and is exactly the movable part of
$|t_0pK_F|$ by the ordinary projection formula. Clearly one has the
following relation:
$$a_0\pi^*(K_X)|_F\ge M_{t_0p+2t_0m_0,X'}|_F\geq b_0\sigma^*(K_{F_0})$$
where $a_0:=t_0p+2t_0m_0$ and $b_0:=t_0p$. This means that there is
an effective $\mathbb{Q}$-divisor $E_F'$ on $F$ such that
$$a_0\pi^*(K_X)|_F=_{\bQ} b_0\sigma^*(K_{F_0})+E_F'.$$
Thus $\pi^*(K_X)|_F =_{\bQ} \frac{p}{p+2m_0}\sigma^*(K_{F_0})+E_F$
with $E_F=\frac{1}{a_0}E_F'$.

Let us consider the case $p\ge 2$ first.

Assume we have defined $a_l$ and $b_l$ such that the following is
satisfied with $l = t:$
$$a_{l}\pi^*(K_X)|_F \ge b_{l}\sigma^*(K_{F_0}).$$
We will define $a_{t+1}$ and $b_{t+1}$ inductively such that the
above inequality is satisfied with $l = t+1$. One may assume from
the beginning (modulo necessary blow-ups) that the fractional part
of the support of $a_t\pi^*(K_X)$ is of simple normal crossing. Then
the Kawamata-Viehweg vanishing theorem gives the surjective map
$$H^0(K_{X'}+\roundup{a_t\pi^*(K_X)}+F)\longrightarrow H^0(F, K_F+
\roundup{a_t\pi^*(K_X)}|_F).$$ One has the relation
\begin{eqnarray*}
|K_{X'}+\roundup{a_t\pi^*(K_X)}+F||_F&=&|K_F+\roundup{a_t\pi^*(K_X)}|_F|\\
&\supset& |K_F+b_t\sigma^*(K_{F_0})|\\
&\supset& |(b_t+1)\sigma^*(K_{F_0})|.
\end{eqnarray*}
Denote by $M_{a_t+1,X'}'$ the movable part of $|(a_t+1)K_{X'}+F|$.
Applying \cite[Lemma 2.7]{MPCPS}, one has $M_{a_t+1,X'}'|_F\geq
(b_t+1)\sigma^*(K_{F_0}).$ Re-modifying our original $\pi$ such that
$|M_{a_t+1,X'}'|$ is base point free. In particular, $M_{a_t+1,X'}'$
is nef. Since $X$ is of general type $|mK_X|$ gives a birational map
whenever $m$ is big enough. Thus we see that $M_{a_t+1,X'}'$ is big
if we fix a very big $t_0$ in advance.

Now the Kawamata-Viehweg vanishing theorem again gives
\begin{eqnarray*}
|K_{X'}+M_{a_t+1,X'}'+F||_F&=&|K_F+M_{a_t+1,X'}'|_F|\\
&\supset& |K_F+(b_t+1)\sigma^*(K_{F_0})|\\
&\supset& |(b_t+2)\sigma^*(K_{F_0})|.
\end{eqnarray*}

Repeat the above procedure and denote by $M_{a_t+u,X'}'$ the movable
part of $|K_{X'}+M_{a_t+u-1,X'}'+F|$ for integers $u\ge 2$. For the
same reason, we may assume $|M_{a_t+u,X'}'|$ is base point free and
is thus nef and big. Inductively, for any $u>0$, one has:
$$M_{a_t+u,X'}'|_F\ge (b_t+u)\sigma^*(K_{F_0}).$$
Applying the vanishing theorem once more, one has
\begin{eqnarray*}
|K_{X'}+M_{a_t+u,X'}'+F||_F&=&|K_F+M_{a_t+u,X'}'|_F|\\
&\supset& |K_F+(b_t+u)\sigma^*(K_{F_0})|\\
&\supset& |(b_t+u+1)\sigma^*(K_{F_0})|.
\end{eqnarray*}

Take $u=p-1$. Noting that
$$|K_{X'}+M_{a_t+p-1,X'}'+F|\subset |(a_t+p+m_0)K_{X'}|$$
and applying \cite[Lemma 2.7]{MPCPS} again, one has
$$a_{t+1}\pi^*(K_X)|_F\ge M_{a_t+p+m_0,X'}|_F\geq
M'_{a_t+p,X'}|_F\geq b_{t+1} \sigma^*(K_{F_0})$$ where
$a_{t+1}:=a_t+p+m_0$ and $b_{t+1}=b_t+p$. Set $\beta_t =
\frac{b_{t}}{a_{t}}.$ Clearly $\lim_{t\mapsto +\infty} \beta_t =
\frac{p}{m_0+p}$.

The case $p=1$ can be considered similarly with a simpler
induction. We leave it as an exercise.
\end{proof}

\begin{lem}\label{diff} Let $\widetilde{\Lambda}\subset |L|$
be a pencil on $V$ where $L$ is a divisor on $V$. Let $R$ be a nef
and big $\bQ$-divisor on $V$. Assume there is a birational
modification $\pi_{\widetilde{\Lambda}}: V'\lra V$ such that:
\begin{itemize}
\item[(1)] the fractional part of $\pi_{\widetilde{\Lambda}}^*(R)$ has simple normal
crossing supports and the movable part of
$\pi_{\widetilde{\Lambda}}^*(\widetilde{\Lambda})$ is base point
free;

\item[(2)] $|K_{V'}+\roundup{\pi_{\widetilde{\Lambda}}^*(R)}|\neq \emptyset.$
\end{itemize}
Then $|K_V+\roundup{R}+L|$ distinguishes different irreducible
elements in the movable part of $\widetilde{\Lambda}$.
\end{lem}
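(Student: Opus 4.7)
The plan is to construct an explicit sub-pencil inside $|K_V+\roundup{R}+L|$ whose members visibly distinguish different irreducible elements of the movable part of $\widetilde{\Lambda}$. Crucially, I expect that no Kawamata-Viehweg vanishing will be needed beyond what is already packaged into hypothesis (2); the argument should reduce to pencil combinatorics once the birational bookkeeping is set up.

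First, I would descend the whole problem to $V'$. Let $M$ denote the movable part of $\pi_{\widetilde{\Lambda}}^{*}\widetilde{\Lambda}$, which is base-point-free by (1); write $\pi_{\widetilde{\Lambda}}^{*}L=M+E_L$ with $E_L$ the effective fixed part. Since (1) also makes $\pi_{\widetilde{\Lambda}}$ a log resolution for $R$, the standard round-up identity
$$(\pi_{\widetilde{\Lambda}})_{*}\OO_{V'}\bigl(K_{V'}+\roundup{\pi_{\widetilde{\Lambda}}^{*}R}\bigr)\;\cong\;\OO_V(K_V+\roundup{R})$$
holds, and more generally the natural inclusion $(\pi_{\widetilde{\Lambda}})_{*}\OO_{V'}(K_{V'}+\roundup{\pi_{\widetilde{\Lambda}}^{*}R}+M)\hookrightarrow\OO_V(K_V+\roundup{R}+L)$ realizes elements of the $V'$-side linear system as elements of $|K_V+\roundup{R}+L|$. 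It therefore suffices to prove the analogous distinguishing statement for $|K_{V'}+\roundup{\pi_{\widetilde{\Lambda}}^{*}R}+M|$ on $V'$.

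Next, I would exploit (2) to build the sub-pencil. Fix any $D_0\in|K_{V'}+\roundup{\pi_{\widetilde{\Lambda}}^{*}R}|$, which is nonempty by hypothesis. For every $M'\in|M|$, the divisor $D_0+M'$ belongs to $|K_{V'}+\roundup{\pi_{\widetilde{\Lambda}}^{*}R}+M|$, and these divisors are pairwise distinct as $M'$ ranges over the base-point-free pencil $|M|\cong\bP^1$. Pushing down to $V$ yields a 1-parameter sub-pencil of $|K_V+\roundup{R}+L|$ naturally parameterized by the movable part of $\widetilde{\Lambda}$ itself. For two distinct irreducible elements $F\ne F'$ of that movable part, the corresponding members of the sub-pencil contain $F$ (resp.\ $F'$) as a component; for a generic choice of $D_0$, neither contains the other, and the fixed part of $\widetilde{\Lambda}$ on $V$ contains neither $F$ nor $F'$ (by definition of movable), yielding the desired separation.

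The main obstacle I anticipate is the birational bookkeeping of the first step -- verifying the pushforward identity and the inclusion of linear systems under $\pi_{\widetilde{\Lambda}}$. This is a routine but delicate computation using $K_{V'}=\pi_{\widetilde{\Lambda}}^{*}K_V+E_\pi$ together with the fact that any mismatch between $\roundup{\pi_{\widetilde{\Lambda}}^{*}R}$ and $\pi_{\widetilde{\Lambda}}^{*}\roundup{R}$ along exceptional divisors is absorbed by $E_\pi$ (possible since $V$ is smooth, so $E_\pi$ is effective). Once this is in place, the rest is elementary: condition (2) supplies the single section $D_0$, and the pencil $|M|$ propagates it into the required family.
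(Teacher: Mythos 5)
Your argument is correct, but in the essential case it takes a genuinely different route from the paper's. The paper splits according to the genus of the base curve $B_{\widetilde{\Lambda}}$ of the fibration induced by the movable part: when $g(B_{\widetilde{\Lambda}})=0$ it argues exactly as you do (all fibers are linearly equivalent, so $D_0$ plus a suitable member of the pencil contains one general fiber and misses another), but when $g(B_{\widetilde{\Lambda}})>0$ it instead applies Kawamata--Viehweg vanishing to $M_{\widetilde{\Lambda}}-F'-F''$ (nef because a member of an irrational pencil has $p\ge 2$ fiber components) to get surjectivity of the restriction to $H^0(F')\oplus H^0(F'')$. Your uniform construction $D_0+M'$ does work in the irrational case as well: base-point-freeness of the movable part (hypothesis (1)) guarantees that for two \emph{generic} irreducible elements $F\neq F'$ --- which lie over independent general points of $B_{\widetilde{\Lambda}}$ and hence, generically, on different members of the pencil --- there is a member $M'$ containing $F$ but not $F'$, and then the hyperplane corresponding to $D_0+M'$ contains $\Phi(x)$ for every $x\in F$ while missing $\Phi(y)$ for general $y\in F'$. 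What your argument does not give, and the vanishing argument does, is separation of two distinct fiber components of one and the same member of an irrational pencil; this is harmless here because the lemma is only ever applied to generic pairs, but you should say explicitly that the genericity lives in the pair $(F,F')$ rather than in $D_0$ (the system $|K_{V'}+\roundup{\pi_{\widetilde{\Lambda}}^*(R)}|$ may consist of a single divisor, so ``a generic choice of $D_0$'' buys nothing). Two small corrections of wording: $|M|$ need not be isomorphic to $\bP^1$ --- what you actually use is the sub-linear system of $|M|$ coming from $\widetilde{\Lambda}$, which is the thing hypothesis (1) makes base point free --- and $\pi_{\widetilde{\Lambda}}^*L$ is only linearly equivalent to $M+E_L$, not equal to it.
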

\begin{proof} Noting that
$${\pi_{\widetilde{\Lambda}}}_*\OO_{V'}(K_{V'}+\pi_{\widetilde{\Lambda}}^*(\roundup{R})
+\pi_{\widetilde{\Lambda}}^*(L)|)\cong \OO_V(K_V+\roundup{R}+L)$$
and that $\pi_{\widetilde{\Lambda}}^*(\roundup{R})\geq
\roundup{\pi_{\widetilde{\Lambda}}^*(R)}$, we only need to study the
smaller linear system
$|K_{V'}+\roundup{\pi_{\widetilde{\Lambda}}^*(R)})+M_{\widetilde{\Lambda}}|$
where $|M_{\widetilde{\Lambda}}|$ is the movable part of
$\pi_{\widetilde{\Lambda}}^*(\widetilde{\Lambda})$. By our
assumption, $|M_{\widetilde{\Lambda}}|$ is composed with a pencil
and $f_{\widetilde{\Lambda}}: V'\lra B_{\widetilde{\Lambda}}$ is an
induced fibration of $\Phi_{|M_{\widetilde{\Lambda}}|}$, where
$B_{\widetilde{\Lambda}}$ is a smooth curve.

If $g(B_{\widetilde{\Lambda}})=0$, then
$|K_{V'}+\roundup{\pi_{\widetilde{\Lambda}}^*(R)}+M_{\widetilde{\Lambda}}|$
distinguishes different general fibers of $f_{\widetilde{\Lambda}}$
because $|K_{V'}+\roundup{\pi_{\widetilde{\Lambda}}^*(R)}|\neq
\emptyset.$

If $g(B_{\widetilde{\Lambda}})>0$, we pick up two general fibers
$F_{\widetilde{\Lambda}}'$ and $F_{\widetilde{\Lambda}}''$. Then 
$M_{\widetilde{\Lambda}}-F_{\widetilde{\Lambda}}'-F_{\widetilde{\Lambda}}''$ is nef and the
Kawamata-Viehweg vanishing theorem \cite{KV, VV} gives 
$$H^1(V',K_{V'}+\roundup{\pi_{\widetilde{\Lambda}}^*(R)}+M_{\widetilde{\Lambda}}- F_{\widetilde{\Lambda}}'-F_{\widetilde{\Lambda}}'')=0.$$
Thus follows the following surjective
map:
\begin{eqnarray*}
&&H^0(V',K_{V'}+\roundup{\pi_{\widetilde{\Lambda}}^*(R)}+M_{\widetilde{\Lambda}})\\
&\longrightarrow&
H^0(F_{\widetilde{\Lambda}}',(K_{V'}+\roundup{\pi_{\widetilde{\Lambda}}^*(R)})|_{F_{\widetilde{\Lambda}}'})
\oplus H^0(F_{{\widetilde{\Lambda}}''},(K_{V'}+
\roundup{\pi_{\widetilde{\Lambda}}^*(R)})|_{F_{\widetilde{\Lambda}}''}).
\end{eqnarray*}
 Again the assumption
$|K_{V'}+\roundup{\pi_{\widetilde{\Lambda}}^*(R)}|\neq \emptyset$
implies that
$|K_{V'}+\roundup{\pi_{\widetilde{\Lambda}}^*(R)}+M_{\widetilde{\Lambda}}|$
(and thus $|K_V+\roundup{R}+L|$) distinguishes different irreducible
elements in the movable part of $\widetilde{\Lambda}$. We are done.
\end{proof}

The following lemma is tacitly used in our context.

\begin{lem}\label{yes} Let $\bar{Q}$ be any
$\bQ$-divisor on a nonsingular projective variety $Z$. Let
$\widetilde{\pi}: \widetilde{Z}\lra Z$ be any birational
modification. Assume that
$|K_{\widetilde{Z}}+\roundup{\widetilde{\pi}^*(\bar{Q})}|$ gives a
birational map. Then $|K_Z+\roundup{\bar{Q}}|$ gives a birational
map.
\end{lem}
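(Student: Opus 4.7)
My plan is to reduce the birationality of $|K_Z+\roundup{\bar{Q}}|$ on $Z$ to the hypothesized birationality of $|K_{\widetilde{Z}}+\roundup{\widetilde{\pi}^*(\bar{Q})}|$ on $\widetilde{Z}$ via two standard ingredients: the monotonicity of round-up under pullback, and the projection formula. The first ingredient is the divisorial inequality $\widetilde{\pi}^*(\roundup{\bar{Q}})\geq\roundup{\widetilde{\pi}^*(\bar{Q})}$, which I would check component by component: writing $\bar{Q}=\sum a_i D_i$ and $\widetilde{\pi}^*D_i=\sum_j n_{ij}\widetilde{D}_j$ with $n_{ij}\in\mathbb{Z}_{\geq 0}$, the coefficient of $\widetilde{D}_j$ on the left is the integer $\sum_i\roundup{a_i}n_{ij}$, which dominates $\sum_i a_i n_{ij}$ and hence also its round-up. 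Therefore $E:=\widetilde{\pi}^*(\roundup{\bar{Q}})-\roundup{\widetilde{\pi}^*(\bar{Q})}$ is an effective integral divisor on $\widetilde{Z}$, and one has the sub-linear system inclusion
$$|K_{\widetilde{Z}}+\roundup{\widetilde{\pi}^*(\bar{Q})}|+E\;\subset\;|K_{\widetilde{Z}}+\widetilde{\pi}^*(\roundup{\bar{Q}})|.$$

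For the second ingredient, since $\widetilde{\pi}$ is a birational morphism to the smooth variety $Z$, one has $\widetilde{\pi}_*\OO_{\widetilde{Z}}(K_{\widetilde{Z}})\cong\OO_Z(K_Z)$, and because $\roundup{\bar{Q}}$ is Cartier on $Z$, the projection formula yields
$$\widetilde{\pi}_*\OO_{\widetilde{Z}}(K_{\widetilde{Z}}+\widetilde{\pi}^*(\roundup{\bar{Q}}))\cong\OO_Z(K_Z+\roundup{\bar{Q}}).$$
Taking global sections gives a canonical identification $H^0(\widetilde{Z},K_{\widetilde{Z}}+\widetilde{\pi}^*(\roundup{\bar{Q}}))=H^0(Z,K_Z+\roundup{\bar{Q}})$, and under this identification the rational map defined on $\widetilde{Z}$ by the left-hand linear system is the composition of the rational map defined by $|K_Z+\roundup{\bar{Q}}|$ with the birational morphism $\widetilde{\pi}$; the two maps are therefore simultaneously birational.

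To finish, I would combine the two ingredients: adding a fixed effective divisor $E$ does not alter the rational map associated to a linear system, and any linear system that contains a birational sub-linear system is itself birational. Hence the hypothesis that $|K_{\widetilde{Z}}+\roundup{\widetilde{\pi}^*(\bar{Q})}|$ is birational forces $|K_{\widetilde{Z}}+\widetilde{\pi}^*(\roundup{\bar{Q}})|$ to be birational, which by the previous paragraph is equivalent to the birationality of $|K_Z+\roundup{\bar{Q}}|$. I do not foresee any essential obstacle: the statement is really a compatibility check between $\roundup{\,\cdot\,}$ and $\widetilde{\pi}^*$, combined with the standard birational invariance of pluricanonical sections on smooth projective varieties.
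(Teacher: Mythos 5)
Your proof is correct and follows essentially the same route as the paper, which also rests on exactly the two facts you isolate: the projection formula $\widetilde{\pi}_*\OO_{\widetilde{Z}}(K_{\widetilde{Z}}+\widetilde{\pi}^*(\roundup{\bar{Q}}))\cong\OO_Z(K_Z+\roundup{\bar{Q}})$ and the inequality $\widetilde{\pi}^*(\roundup{\bar{Q}})\geq\roundup{\widetilde{\pi}^*(\bar{Q})}$. You merely spell out the coefficientwise verification of the inequality that the paper leaves implicit.
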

\begin{proof} This is clear due to the fact:
$$\widetilde{\pi}_*\OO_{\widetilde{Z}}(K_{\widetilde{Z}}+
\widetilde{\pi}^*(\roundup{\bar{Q}}))\cong
\OO_Z(K_Z+\roundup{\bar{Q}})$$ and
$\widetilde{\pi}^*(\roundup{\bar{Q}})\geq
\roundup{\widetilde{\pi}^*(\bar{Q})}$.
\end{proof}

\section{\bf $\bQ$-divisors on surfaces and threefolds}

We leave the proof for the next two results on surfaces as an exercise which is really a standard $\bQ$-divisor argument. 

\begin{lem}\label{pg1} Let $S$ be a nonsingular projective surface
of general type. Denote by $\sigma:S\lra S_0$ the birational
contraction onto the minimal model $S_0$. For any nef and big
${\mathbb Q}$-divisor $Q_2$ on $S$, one has
$$h^0(S, K_S+m\sigma^*(K_{S_0})+\roundup{Q_2})>1$$ under one of
the following situations:

(1) $m\geq 2$;

(2) $m=1$ and $p_g(S)>0$.
\end{lem}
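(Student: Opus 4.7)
The plan is to combine Kawamata--Viehweg vanishing with Riemann--Roch on $S$. By Lemma~\ref{yes} I may replace $S$ by a further birational modification without affecting the conclusion, so I arrange that the fractional part of $Q_2$ is supported on a simple normal crossings divisor and (by one more blow-up if necessary) that its support is disjoint from the $\sigma$-exceptional divisor $E$ appearing in $K_S = \sigma^*(K_{S_0}) + E$. Since $m\sigma^*(K_{S_0}) + Q_2$ is nef and big, Kawamata--Viehweg vanishing yields
\[
H^i\bigl(S,\, K_S + m\sigma^*(K_{S_0}) + \roundup{Q_2}\bigr) = 0, \quad i \geq 1,
\]
so $h^0$ equals the Euler characteristic.

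Setting $L = m\sigma^*(K_{S_0}) + \roundup{Q_2}$, Riemann--Roch on the surface gives
\[
h^0\bigl(S,\, K_S + L\bigr) = \chi(\OO_S) + \tfrac{1}{2}(K_S + L)\cdot L.
\]
Now $\chi(\OO_S) = \chi(\OO_{S_0}) \geq 1$ is the standard Miyaoka--Yau consequence for a minimal surface of general type, and $K_{S_0}^2 \geq 1$. Expanding the intersection with $K_S = \sigma^*(K_{S_0}) + E$, the dominant term is $m(m+1)\,K_{S_0}^2$, and the remaining terms $\sigma^*(K_{S_0}) \cdot \roundup{Q_2}$, $E \cdot \roundup{Q_2}$, and $\roundup{Q_2}^2$ are non-negative under the support arrangement above (nefness of $\sigma^*(K_{S_0})$ and $Q_2$, disjointness of $E$ from the fractional support, and bigness $Q_2^2 > 0$). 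In case~(1), $m \geq 2$ gives $h^0 \geq 1 + 3 = 4 > 1$. In case~(2), $m=1$, Riemann--Roch alone only gives $h^0 \geq 2$, which is tight; to handle boundary behavior I would use $p_g(S) > 0$ by picking a non-zero $s \in H^0(K_S)$ and invoking the injection
\[
H^0\bigl(S,\, \sigma^*(K_{S_0}) + \roundup{Q_2}\bigr) \hookrightarrow H^0\bigl(S,\, K_S + \sigma^*(K_{S_0}) + \roundup{Q_2}\bigr)
\]
given by multiplication by $s$, combined with a second Kawamata--Viehweg plus Riemann--Roch computation on the nef and big class $\sigma^*(K_{S_0}) + Q_2$ to secure $h^0 \geq 2$ on the source.

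The main obstacle is controlling the signs of the intersection numbers involving $\roundup{Q_2}$---especially $\roundup{Q_2}^2$ and $E \cdot \roundup{Q_2}$---because the ``negative fractional part'' $\roundup{Q_2} - Q_2$ can have components of strongly negative self-intersection on an SNC configuration and could a priori meet $\sigma$-exceptional curves. This is the technical heart of the ``standard $\bQ$-divisor argument'' the author leaves as an exercise, and it is resolved by engineering the further blow-up to separate the supports involved and to let the positive contribution $Q_2^2 > 0$ dominate any unavoidable negative boundary correction.
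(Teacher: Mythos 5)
The paper gives no proof of this lemma (it is explicitly left ``as an exercise''), so I can only judge your argument on its own terms, and it has a genuine gap at exactly the point you flag as the ``technical heart.'' After Kawamata--Viehweg vanishing reduces the claim to $\chi(K_S+L)\geq 2$ with $L=m\sigma^*(K_{S_0})+\roundup{Q_2}$, you assert that $\roundup{Q_2}^2$ and $E\cdot\roundup{Q_2}$ are non-negative ``under the support arrangement.'' Neither can be arranged. Writing $\roundup{Q_2}=Q_2+\Delta$ with $\Delta=\sum\delta_i\Gamma_i$, $0<\delta_i<1$, the deviation from the clean Riemann--Roch count is $2(m\sigma^*(K_{S_0})+Q_2)\cdot\Delta+(K_S+\Delta)\cdot\Delta$, and the second summand is bounded below only by roughly $-2\cdot(\text{number of components of }\Delta)$, which is unbounded over all nef and big $Q_2$: for instance $Q_2=A+\tfrac12\sum_{i=1}^{N}\Gamma_i$ with $A$ ample, $A\cdot\Gamma_i=1$ and the $\Gamma_i$ disjoint $(-2)$-curves is nef and big with $(K_S+\Delta)\cdot\Delta=-N/2$; similarly $\roundup{Q_2}^2<0$ is achievable (take $Q_2=P+\tfrac13\Gamma$ with $\Gamma$ a $(-3)$-curve, $P$ nef and big, $P\cdot\Gamma=1$, $P^2<1$). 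Your proposed remedies do not repair this: $Q_2^2>0$ can be arbitrarily small and cannot ``dominate'' a correction that grows with the number of fractional components, and blowing up cannot make the fractional support of $\pi^*(Q_2)$ disjoint from the exceptional locus --- blowing up a point of that support puts a fractional coefficient on the new exceptional curve. (In the examples above the loss happens to be cancelled by other cross terms, but establishing that cancellation in general is precisely the missing proof.) The $m=1$ branch inherits the same defect and adds another: Kawamata--Viehweg does not apply to $\sigma^*(K_{S_0})+\roundup{Q_2}$, which is not of the form $K_S+(\text{nef and big})$, so Riemann--Roch does not compute $h^0$ of the source of your multiplication map. (Also, $\chi(\OO_S)\geq 1$ for surfaces of general type is not a Miyaoka--Yau consequence, but that is cosmetic.)

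The ``standard $\bQ$-divisor argument'' intended here avoids Riemann--Roch on the surface altogether. Since $P_2(S_0)=\chi(\OO_{S_0})+K_{S_0}^2\geq 2$ (resp.\ $p_g(S)>0$ in case (2)), take a generic irreducible element $C$ of the movable part of $|2\sigma^*(K_{S_0})|$ (resp.\ of $|\sigma^*(K_{S_0})|$), so that $g(C)\geq 2$ because $S$ is of general type; then use
$$H^0\bigl(S,K_S+m\sigma^*(K_{S_0})+\roundup{Q_2}\bigr)\supseteq H^0\bigl(S,K_S+(m-2)\sigma^*(K_{S_0})+\roundup{Q_2}+C\bigr)\twoheadrightarrow H^0(C,K_C+D),$$
the surjection coming from Kawamata--Viehweg vanishing for the nef and big divisor $(m-2)\sigma^*(K_{S_0})+Q_2$ (resp.\ $Q_2$), with $D$ the restricted round-up, of degree $\geq 1$ because $Q_2\cdot C>0$. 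Riemann--Roch on the curve $C$, where no round-up difficulty arises, gives $h^0(C,K_C+D)\geq g(C)\geq 2$. I recommend rebuilding your proof along these lines (with the usual care when the movable part is composed with a pencil, and treating $p_g(S)=1$ in case (2) separately).
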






\begin{thm}\label{surface} Keep the same notation as in Lemma
\ref{pg1}. Then the rational map
$\Phi_{|K_S+m\sigma^*(K_{S_0})+\roundup{Q_2}|}$ is birational in
either of the following cases:

(1) $m\ge 4$;

(2) $m\ge 3$ and $p_g(S)>0$.
\end{thm}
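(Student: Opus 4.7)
The plan is to prove the birationality of $\Phi_{|K_S+m\sigma^*(K_{S_0})+\roundup{Q_2}|}$ by the classical two-step ``pencil plus restriction'' argument, with Lemma \ref{pg1} and Lemma \ref{diff} as the main engines, supplemented by Kawamata--Viehweg vanishing and adjunction on a general curve in the pencil.

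First, I separate distinct members of a pencil of curves. Since $S_0$ is of general type, one can produce a pencil $\widetilde{\Lambda}\subset|k\sigma^*(K_{S_0})|$ for a small integer $k$ --- taking $k=1$ when $p_g(S)>0$ (using a sub-pencil of $|K_S|$), or $k=2$ in general (using the second pluri-canonical system, whose dimension for minimal surfaces of general type is $\ge 2$). For a member $L\in\widetilde{\Lambda}$, one has the tautological identity
$$K_S+m\sigma^*(K_{S_0})+\roundup{Q_2}=K_S+\roundup{R}+L,\qquad R:=(m-k)\sigma^*(K_{S_0})+Q_2,$$
and $R$ is nef and big for $m\ge k+1$. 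The non-emptiness hypothesis $|K_S+\roundup{R}|\ne\emptyset$ of Lemma \ref{diff} is supplied by Lemma \ref{pg1}: it holds whenever $m-k\ge 2$, and also whenever $m-k\ge 1$ provided $p_g(S)>0$. Lemma \ref{diff} then guarantees that $|K_S+m\sigma^*(K_{S_0})+\roundup{Q_2}|$ distinguishes distinct members of the movable part of $\widetilde{\Lambda}$. Picking $k=2$ yields case (1) with $m\ge 4$, while picking $k=1$ together with the $p_g>0$ clause of Lemma \ref{pg1} yields case (2) with $m\ge 3$.

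Next, I separate two general points $p,q$ which happen to lie on the same general member $C\in\widetilde{\Lambda}$. After further birational modification $\widetilde\pi\colon\widetilde S\lra S$ I may assume $C$ is a smooth irreducible curve and the fractional part of $\widetilde\pi^*R$ has simple normal crossings. Kawamata--Viehweg vanishing yields
$$H^1\bigl(\widetilde{S},\,K_{\widetilde{S}}+\roundup{\widetilde\pi^*R}\bigr)=0,$$
which, via the short exact sequence defining $C$, gives a surjection
$$H^0\bigl(\widetilde{S},\,K_{\widetilde{S}}+\roundup{\widetilde\pi^*R}+C\bigr)\twoheadrightarrow H^0\bigl(C,\,(K_{\widetilde{S}}+\roundup{\widetilde\pi^*R}+C)|_C\bigr).$$
By adjunction the right-hand side is $H^0(C,\,K_C+N_C)$ for some line bundle $N_C$ of strictly positive degree on $C$. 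The numerical input $\sigma^*(K_{S_0})\cdot C\ge K_{S_0}^2\ge 1$ together with the hypotheses on $m$ forces $\deg N_C\ge 2g(C)+1$, whence $|K_C+N_C|$ is very ample on $C$ and separates $p$ from $q$. Finally, Lemma \ref{yes} descends the birationality from $\widetilde S$ back to $S$.

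The main obstacle will be the degree estimate $\deg N_C\ge 2g(C)+1$ under the sharp bounds $m\ge 4$ and $m\ge 3$ (with $p_g>0$); this is a careful numerical computation using adjunction on $C$, the inequalities $\sigma^*(K_{S_0})\cdot C\ge 1$ and $C^2\ge 0$, and the explicit form of the pencil chosen in Step~1. The remaining steps amount to the standard $\bQ$-divisor round-up manipulations that the author alludes to as an exercise.
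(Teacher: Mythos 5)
Your outline --- use a pluricanonical pencil, invoke Lemma \ref{diff} to separate fibers, then restrict to a general curve $C$ in the pencil via Kawamata--Viehweg vanishing and check very ampleness of $K_C+N_C$ --- is the right skeleton, and it is exactly the ``standard $\bQ$-divisor argument'' the paper alludes to. However, there are two concrete gaps.

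First, the choice of pencil in case (2). You take $k=1$ ``using a sub-pencil of $|K_S|$'' whenever $p_g(S)>0$; but $p_g(S)>0$ only means $p_g\ge 1$, and if $p_g(S)=1$ the system $|K_S|$ is a single point, so no pencil exists there. One must fall back on $k=2$ (the bicanonical pencil, which always exists since $P_2(S_0)=K_{S_0}^2+\chi\ge 2$). With $k=2$ and $m=3$ the divisor $R$ becomes $\sigma^*(K_{S_0})+Q_2$, and Lemma \ref{pg1}(2) (applied with $m'=1$ and $p_g>0$) still gives $|K_S+\roundup{R}|\neq\emptyset$, so the fiber-separation step survives. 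But the numerics on $C$ change: see below.

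Second, the degree estimate on $C$. The numerical input you quote, $\sigma^*(K_{S_0})\cdot C\ge K_{S_0}^2$, is unjustified and in general false: if $C$ is a general member of the movable part of the pencil, then $C^2=0$ and the only reliable bound is $\sigma^*(K_{S_0})\cdot C\ge 1$ (a positive integer, by nefness, bigness, and the Hodge index theorem). Likewise, the target should be $\deg N_C\ge 3$ (equivalently $\deg(K_C+N_C)\ge 2g+1$), not $\deg N_C\ge 2g(C)+1$ as written. With $\sigma^*(K_{S_0})\cdot C\ge 1$, $Q_2\cdot C>0$ (bigness again), and $\roundup{\cdot}|_C$ of integral degree, one gets $\deg N_C>(m-k)$, hence $\deg N_C\ge m-k+1$. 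For case (1), $m\ge 4$ and $k=2$ give $\deg N_C\ge 3$, and the argument closes. For case (2) with $p_g\ge 2$, taking $k=1$ and $m\ge 3$ also gives $\deg N_C\ge 3$. But for case (2) with $p_g=1$, you are forced to $k=2$, and $m=3$ only yields $\deg N_C\ge 2$; then $|K_C+N_C|$ fails to separate generic points precisely when $C$ is hyperelliptic and $N_C$ belongs to the $g^1_2$, and nothing in the proposal excludes this. So case (2) with $p_g(S)=1$ is a genuine unhandled subcase; it needs either a different pencil, a finer analysis of bicanonical fibrations with hyperelliptic general fiber, or a separate argument exploiting the unique canonical curve on $S$.
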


\begin{prop}\label{3-eff} Assume $\dim (V)=3$
and $P_{m_0}(V)\geq 2$ for some positive integer $m_0$.  Keep the
same notation as in \ref{setup}. Let $Q_3$ be a nef
 $\bQ$-divisor on $V$ and $Q'_3$ a nef $\bQ$-divisor on $X'$. Then
$K_{X'}+\roundup{q_3\pi^*(K_X)+Q'_3}+F$ is effective for all
rational numbers $q_3> 2m_0+2$. Consequently $mK_V+\roundup{Q_3}$ is
effective for all integers
 $m\geq 3m_0+4$.
\end{prop}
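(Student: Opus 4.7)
The plan is to reduce the first assertion to the surface lemma \ref{pg1}(1) on a general fiber $F$ of $f$, using Kawamata--Viehweg vanishing to lift sections. The second assertion will then follow by combining this with the bound $pF\leq_\bQ m_0\pi^*(K_X)$ and Reduction \ref{reduction}.

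For the first assertion, note that $q_3\pi^*(K_X)+Q_3'$ is nef and big on $X'$ because $\pi^*(K_X)$ is nef and big, $q_3>0$, and $Q_3'$ is nef. After further blowing up $X'$ so that its fractional part has simple normal crossing support (permitted by Setup \ref{setup}(v)), Kawamata--Viehweg vanishing yields
$$H^1\bigl(X',\,K_{X'}+\roundup{q_3\pi^*(K_X)+Q_3'}\bigr)=0,$$
and the restriction sequence for a general fiber $F$ of $f$ produces the surjection
$$H^0\bigl(X',K_{X'}+\roundup{q_3\pi^*(K_X)+Q_3'}+F\bigr)\twoheadrightarrow H^0\bigl(F,K_F+\roundup{(q_3\pi^*(K_X)+Q_3')|_F}\bigr).$$
It thus suffices to show that the right-hand group is nonzero for every rational $q_3>2m_0+2$.

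I will split on $b=g(B)$. When $b>0$, Lemma \ref{b>0} gives $\pi^*(K_X)|_F\sim\sigma^*(K_{F_0})$; since $q_3-2>2m_0\geq 2$, the $\bQ$-divisor $(q_3-2)\sigma^*(K_{F_0})+Q_3'|_F$ is nef and big on the surface $F$, and Lemma \ref{pg1}(1) (applied with $m=2$) supplies the required nonzero section. When $b=0$, the general fiber $F$ is a surface of general type and so has a Gorenstein minimal model; Lemma \ref{b=0} then yields $\pi^*(K_X)|_F=_\bQ \beta_t\sigma^*(K_{F_0})+E_t$ with $E_t$ effective and $\beta_t\to p/(m_0+p)$. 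The hypothesis $q_3>2m_0+2$ is designed precisely to force the limit of $q_3\beta_t$ to exceed $2$ in the worst case $p=1$; choosing $t$ large I may write $q_3\beta_t=2+\delta$ with $\delta>0$, split off a copy of $2\sigma^*(K_{F_0})$, and again conclude by Lemma \ref{pg1}(1) applied to the nef and big remainder.

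For part (b), Reduction \ref{reduction} with $Q_3':=\pi_V^*(Q_3)$ reduces the problem to showing $|K_{X'}+\roundup{(m-1)\pi^*(K_X)+Q_3'}|\neq\emptyset$ for integers $m\geq 3m_0+4$. From $pF\equiv M_0\leq m_0\pi^*(K_X)$ the $\bQ$-divisor $m_0\pi^*(K_X)-pF$ is effective, so $(m_0/p)\pi^*(K_X)-F$ is $\bQ$-effective, and a fortiori so is $m_0\pi^*(K_X)-F$. Taking $q_3=2m_0+2+\epsilon$ for a small rational $\epsilon>0$ in part (a) and adding the effective $\bQ$-divisor $(m-1-q_3-m_0)\pi^*(K_X)+(m_0\pi^*(K_X)-F)$, which is $\bQ$-effective as soon as $m-1-q_3\geq m_0$, hence for every integer $m\geq 3m_0+4$, produces the desired section.

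The main obstacle is the tight boundary estimate in the $b=0$ case: Lemma \ref{b=0} only supplies $\beta_t\to p/(m_0+p)$, and it is the minimal value $p=1$ that forces the threshold $2m_0+2$; any weaker bound on $q_3$ would fail to guarantee $q_3\beta_t>2$ uniformly in $t$.
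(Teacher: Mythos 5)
Your proof is correct and follows essentially the same route as the paper: Kawamata--Viehweg vanishing to restrict to a general fiber $F$, Lemmas \ref{b>0} and \ref{b=0} to compare $\pi^*(K_X)|_F$ with $\sigma^*(K_{F_0})$, and Lemma \ref{pg1}(1) with $m=2$ on the surface $F$, with the worst case $p=1$ forcing the threshold $2m_0+2$. Your derivation of the second assertion from the first (absorbing $F$ via $m_0\pi^*(K_X)\geq F$ together with Reduction \ref{reduction}) is also exactly the intended argument, which the paper leaves implicit.
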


\begin{proof} The last statement is a direct application of the
first one due to \ref{reduction}. We prove the first statement.

\begin{quote}{\em
($\pounds$) Take further necessary modifications to $X'$ such that
the supports of the fractional parts of $Q_3'$ and $\pi_V^*(Q_3)$
are of simple normal crossing. For simplicity we still use $X'$ to
denote the final birational model dominating $V$.}
\end{quote}

We have a fibration $f:X'\lra B$ induced from $|m_0K|$ as in
\ref{setup}.  We consider the linear system
$$|K_{X'}+\roundup{q_3\pi^*(K_X)+Q_3'}+F|\subset
|(\roundup{q_3}+m_0+1)K_{X'}+\roundup{Q_3'}|.$$ Because
$q_3\pi^*(K_X)+Q_3'$ is nef and big and the fractional part of
$Q_3'$ is of simple normal crossing by our assumption, the vanishing
theorem says
$$H^1(X,K_{X'}+\roundup{q_3\pi^*(K_X)+Q_3'})=0.$$ Thus one has
the surjective map:
$$H^0(X,K_{X'}+\roundup{q_3\pi^*(K_X)+Q_3'}+F)\lra H^0(F,
K_F+\roundup{q_3\pi^*(K_X)+Q_3'}|_F).$$

Note that in our case a general fiber of $f$ is a surface of general
type which has a Gorenstein minimal model. Thus the conditions in
both Lemma \ref{b>0} and Lemma \ref{b=0} are satisfied. If $b>0$,
Lemma \ref{b>0} says that $\pi^*(K_X)|_F\sim \sigma^*(K_{F_0})$
where $\sigma: F\lra F_0$ is the contraction map. If $g(B)=0$, Lemma
\ref{b=0} says that one can find a very big number $s$ such that
$$\pi^*(K_X)|_F\geq \beta_s\sigma^*(K_{F_0})$$
and $\beta_s$ is sufficiently near $\frac{p}{m_0+p}\geq
\frac{1}{m_0+1}$ and $\beta_s<\frac{p}{m_0+p}$.

Let us put $\alpha_s:=\frac{p}{m_0+p}-\beta_s$. Then
$\alpha_s\mapsto 0$ whenever $s\mapsto +\infty$.

Whenever $q_3> 2m_0+2$, one has
\begin{eqnarray*} q_3\pi^*(K_X)|_F&\geq&
q_3\beta_s
\sigma^*(K_{F_0})=q_3(\frac{p}{m_0+p}-\alpha_s)\sigma^*(K_{F_0})\\
&=&(2+(\frac{q_3p-2m_0-2p}{m_0+p}-q_3\alpha_s))\sigma^*(K_{F_0}).
\end{eqnarray*}
When $s$ is big enough, one sees
$\frac{q_3p-2m_0-2p}{m_0+p}-q_3\alpha_s>0$. We may assume that
$q_3\pi^*(K_X)|_F-q_3\beta_s\sigma^*(K_{F_0})$ is $\bQ$-linearly
equivalent to an effective divisor $R_{q_3,s}$ on $F$. Then
\begin{eqnarray*}\widetilde{R_{q_3,s}}&:=&q_3\pi^*(K_X)|_F-R_{q_3,s}-2\sigma^*(K_{F,0})\\
& \equiv
&(\frac{q_3p-2m_0-2p}{m_0+p}-q_3\alpha_s)\sigma^*(K_{F_0})\end{eqnarray*}
is nef and big since $\frac{q_3p-2m_0-2p}{m_0+p}>0$. Therefore
$$H^0(K_F+\roundup{q_3\pi^*(K_X)+Q_3'}|_F)\supset
H^0(K_F+2\sigma^*(K_{F_0})+\roundup{\widetilde{R}_{q_3,n}+
Q_3'})\neq 0$$ by Lemma \ref{pg1}. And in fact
$h^0(K_F+2\sigma^*(K_{F_0})+\roundup{\widetilde{R}_{q_3,n}+
Q_3'})>1$.
\end{proof}

\begin{rem}\label{fiber} In the proof of Proposition \ref{3-eff},
if the general fiber of $f$ is a surface with $p_g>0$, then,
according to Lemma \ref{pg1},
$K_{X'}+\roundup{q_3\pi^*(K_X)+Q'_3)}+F$ is effective for all
rational numbers $q_3> m_0+1$. And accordingly $mK_V+\roundup{Q_3}$
is effective for all integers $m\geq 2m_0+3$.
\end{rem}

\begin{thm}\label{3folds} Assume $\dim (V)=3$
and $P_{m_0}(V)\geq 2$ for some positive integer $m_0$.  Keep the
same notation as in \ref{setup}. Let $Q_3$ be a nef $\bQ$-divisor on
$V$ and $Q_3'$ a nef $\bQ$-divisor on $X'$. Then
\begin{itemize}
\item[(1)] $\Phi_{|K_{X'}+\roundup{q_3\pi^*(K_X)+Q_3'}+M_0|}$ is
birational for all rational numbers $q_3>4m_0+4$. In particular
$\Phi_{|mK_V+\roundup{Q_3}|}$ is birational for all integers $m\geq
5m_0+6$;

\item[(2)] if the general fiber $F$ of $f\colon X'\lra B$ has positive geometric
genus, $\Phi_{|K_{X'}+\roundup{q_3\pi^*(K_X)+Q_3'}+M_0|}$ is
birational for all rational numbers $q_3> 3m_0+3$. In particular
$\Phi_{|mK_V+\roundup{Q_3}|}$ is birational for all integers $m\geq
4m_0+5$.
\end{itemize}
\end{thm}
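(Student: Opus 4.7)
The plan is to restrict to a general fiber $F$ of $f\colon X'\lra B$ via Kawamata-Viehweg vanishing, apply the surface statement Theorem~\ref{surface} fiberwise, and combine with a ``distinguishing fibers'' argument based on Lemma~\ref{diff} and Proposition~\ref{3-eff}. First I would take a further birational modification of $X'$ so that the fractional part of $q_3\pi^*(K_X)+Q_3'$ has simple normal crossing support (as in ($\pounds$) of the proof of Proposition~\ref{3-eff}). Observing that $q_3\pi^*(K_X)+Q_3'$ is nef and big and that $M_0-F\ge 0$ (since $M_0\sim pF$ or $M_0=\sum_{i=1}^{p} F_i$ with $p\ge 1$), Kawamata-Viehweg vanishing applied to the nef and big $\bQ$-divisor $q_3\pi^*(K_X)+Q_3'+(M_0-F)$ yields the restriction surjection
$$H^0\bigl(X',\,K_{X'}+\roundup{q_3\pi^*(K_X)+Q_3'}+M_0\bigr)\twoheadrightarrow H^0\bigl(F,\,K_F+(\roundup{q_3\pi^*(K_X)+Q_3'}+M_0-F)|_F\bigr).$$

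Next I would show that $|K_{X'}+\roundup{q_3\pi^*(K_X)+Q_3'}+M_0|$ distinguishes different general fibers of $f$. This is the content of Lemma~\ref{diff}, applied with $\widetilde{\Lambda}$ the pencil inducing $f$, $L=M_0$ and $R=q_3\pi^*(K_X)+Q_3'$. The nonvanishing hypothesis (2) of that lemma is supplied by Proposition~\ref{3-eff}, whose requirement $q_3>2m_0+2$ is comfortably satisfied in both of our ranges $q_3>4m_0+4$ and $q_3>3m_0+3$.

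On the fiber $F$, I would use Lemmas~\ref{b>0} and \ref{b=0} to bound $\pi^*(K_X)|_F\ge\beta\,\sigma^*(K_{F_0})$ modulo an effective $\bQ$-divisor, where $\beta$ can be taken arbitrarily close to $1$ when $b>0$ and arbitrarily close to $\frac{p}{m_0+p}\ge\frac{1}{m_0+1}$ when $b=0$; the worst case $b=0$, $p=1$ dictates the coefficient. For (1), whenever $q_3>4m_0+4$ I can arrange $q_3\beta>4$, and after absorbing effective divisors and the nef contribution of $Q_3'|_F$ the restricted linear system contains $|K_F+4\sigma^*(K_{F_0})+\roundup{Q_2}|$ with $Q_2$ nef and big on $F$. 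Theorem~\ref{surface}(1) then produces a birational map on $F$. For (2), the assumption $p_g(F)>0$ permits using Theorem~\ref{surface}(2) with coefficient $m=3$, which in turn only requires $q_3\beta>3$, i.e.\ $q_3>3m_0+3$.

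Combining birationality on a general fiber with fiber separation yields birationality of $|K_{X'}+\roundup{q_3\pi^*(K_X)+Q_3'}+M_0|$. The ``in particular'' pluricanonical claims then follow from Reduction~\ref{reduction}: since
$$K_{X'}+\roundup{q_3\pi^*(K_X)+Q_3'}+M_0\;\le\;(\roundup{q_3}+m_0+1)K_{X'}+\roundup{Q_3'},$$
any integer $m\ge q_3+m_0+1$ works, giving $m\ge 5m_0+6$ in (1) and $m\ge 4m_0+5$ in (2) after pushing down along $\pi_V$. I expect the main obstacle to be the bookkeeping in the fiberwise step, in particular ensuring that $Q_2$ is genuinely nef and big after redistributing the coefficients of $\sigma^*(K_{F_0})$ and the various effective error terms, and checking that Lemma~\ref{diff} can be applied without trouble in the delicate case $b=0$, $p=1$, where the movable part $M_0$ consists of only a single fiber.
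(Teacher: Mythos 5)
Your proposal is correct and follows essentially the same route as the paper: Proposition~\ref{3-eff} (resp.\ Remark~\ref{fiber}) feeds the nonvanishing hypothesis of Lemma~\ref{diff} to separate fibers, Kawamata--Viehweg vanishing restricts the system to a general fiber $F$, Lemmas~\ref{b>0} and~\ref{b=0} give $\pi^*(K_X)|_F\ge\beta\,\sigma^*(K_{F_0})$ with $\beta$ close to $\frac{p}{m_0+p}\ge\frac{1}{m_0+1}$, and Theorem~\ref{surface}(1) (resp.\ (2)) concludes on $F$. The only point worth making explicit is that Proposition~\ref{3-eff} gives effectivity of $K_{X'}+\roundup{q_3\pi^*(K_X)+Q_3'}+F$, so to verify hypothesis (2) of Lemma~\ref{diff} one absorbs $F\le M_0\le m_0\pi^*(K_X)$ into the coefficient, which your ranges of $q_3$ indeed accommodate.
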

\begin{proof} According to Proposition \ref{3-eff} and Remark \ref{fiber},
$$K_{X'}+\roundup{q_3\pi^*(K_X)+Q_3'}$$
is always effective under each situation since $m_0\pi^*(K_X)\geq
M_0$. Therefore Lemma \ref{diff} says that
$|K_{X'}+\roundup{q_3\pi^*(K_X)+Q_3'}+M_0|$ can distinguish
different generic irreducible elements of $|M_0|$. Thus it suffices
to prove the birationality of
$\Phi_{|K_{X'}+\roundup{q_3\pi^*(K_X)+Q_3'}+M_0|}|_F$ for a general
fiber $F$ of $f$. The proofs for statements (1) and (2) are similar.
We only consider (1) while omitting the proof for (2).

Of course, the first step in utilizing the vanishing theorem is to
make the support of the fractional part of $\{Q_3'\}$ to be simple
normal crossing. This can be done by re-modifying $X'$. For
simplicity we may assume, from now on, that our $X'$ has the
property stated in $(\pounds)$ (see the proof of Proposition
\ref{3-eff}).

The Kawatama-Viehweg vanishing theorem implies, noticing $F|_F\sim
0$, that
$$|K_{X'}+\roundup{q_3\pi^*(K_X)+Q_3'}+M_0||_F=|K_F+\roundup{q_3\pi^*(K_X)+Q_3'}|_F|.$$
We study a smaller system
$|K_F+\roundup{q_3\pi^*(K_X)|_F+{Q_3'}|_F}$. We have already
$\pi^*(K_X)|_F\geq \beta_s\sigma^*(K_{F_0})$ and
$0<\alpha_s:=\frac{p}{m_0+p}-\beta_s$, $\alpha_s\mapsto 0$ whenever
$s\mapsto +\infty$. When $q_3> 4m_0+4$,
\begin{eqnarray*}
q_3\pi^*(K_X)|_F&\geq & q_3\beta_s\sigma^*(K_F)\\
&= & 4\sigma^*(K_{F_0})+t_s\sigma^*(K_{F_0})
\end{eqnarray*}
where $t_s:=\frac{q_3p-4m_0-4p}{m_0+p}-q_3\alpha_s>0$ whenever $s$
is big enough. Therefore, by Theorem \ref{surface} and Lemma
\ref{yes},
$|K_S+\roundup{4\sigma^*(K_{F_0})+t_s\sigma^*(K_{F_0})+{Q_3'}}|_F|$
gives a birational map. Being a bigger linear system,
$$|K_F+\roundup{q_3\pi^*(K_X)+Q_3'}|_F|$$ also gives a birational map.
We are done.
\end{proof}

\begin{setup}\label{pg}{\bf Threefolds $V$ with $\chi(\OO_V)>1$.} Keep the
same notation as in \ref{setup}. As seen in \cite[Lemma
2.32]{Explicit_II}, if $\chi(\OO_V)>1$ and $q(V)=0$, then a general
fiber $F$ of $f:X'\lra B$ has the geometric genus $p_g(F)>0$.
\end{setup}

\begin{lem}\label{nu3} Assume $\dim (V)=3$. Then
$$\lambda(V)\leq \begin{cases}
18& \text{if}\ \chi(\OO_V)>1\ \text{and}\ q(V)=0;\\
10& otherwise.
\end{cases}$$
\end{lem}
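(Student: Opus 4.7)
The plan is to reduce to a minimal model and invoke Reid's Riemann-Roch formula. By the established MMP in dimension three, $V$ admits a minimal model $X$ with at worst $\bQ$-factorial terminal singularities, and $P_m$, $\chi(\OO)$, and $q$ all agree on $V$ and $X$. Kawamata-Viehweg vanishing combined with Serre duality gives $h^i(X,mK_X)=0$ for $i\geq 1$ and $m\geq 2$, so $P_m(X)=\chi(\OO_X(mK_X))$ in this range. Reid's Riemann-Roch \cite{YPG} then gives
$$\chi(\OO_X(mK_X))=\tfrac{1}{12}m(m-1)(2m-1)K_X^3-(2m-1)\chi(\OO_X)+\ell(m),$$
where $\ell(m)$ is a correction depending only on the basket of terminal singularities of $X$.

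For the case $\chi(\OO_V)>1$ and $q(V)=0$, I would exploit $\chi(\OO_X)\geq 2$. Running through $m=1,\dots,18$ and combining Reid's formula with the nonnegativity and integrality constraints on $\ell(m)$, together with the positivity $K_X^3>0$, should force at least one value $P_m\geq 2$ in that range; this is in essence the argument of \cite{Explicit_I, Explicit_II}. For the general bound $\lambda(V)\leq 10$, I would split into subcases. When $q(V)>0$, the Albanese morphism of $V$ together with Koll\'ar-type extension arguments \cite{Kol, C-H} produces $P_m\geq 2$ for some $m\leq 10$. When $q(V)=0$ and $\chi(\OO_V)\leq 1$, Reid's formula together with the basket table once more yields $\lambda(V)\leq 10$.

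The main obstacle is the combinatorial analysis of $\ell(m)$ over all baskets of terminal singularities compatible with the hypotheses: one must rule out any basket configuration that could simultaneously force $P_m<2$ for every $m\leq 18$ (respectively $m\leq 10$) while remaining consistent with the assumed value of $\chi(\OO_X)$. This enumeration has been essentially carried out in the plurigenus tables of \cite{Explicit_I, Explicit_II}, from which the stated bounds can be extracted.
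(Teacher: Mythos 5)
Your proposal matches the paper's proof in substance: the paper establishes the lemma purely by citation --- Theorem 4.8 and Corollary 3.13 of \cite{Explicit_II} for the cases $\chi(\OO_V)>1,\ q(V)=0$ and $\chi(\OO_V)=1$, \cite{C-H2} for $q(V)>0$, and Reid's formula via \cite{Chen-Zuo} for $\chi(\OO_V)<0$ --- which is exactly the reduction to Reid's Riemann--Roch and the Chen--Chen basket analysis that you describe. Note only that nonnegativity of $\ell(m)$ together with $K_X^3>0$ cannot by itself force $P_m\geq 2$ when $\chi(\OO_X)\geq 2$ (the term $-(2m-1)\chi$ enters with the wrong sign, so one needs the lower bounds on basket contributions derived in \cite{Explicit_II}, to which you rightly defer), and for $q(V)>0$ the applicable reference is \cite{C-H2} on irregular $3$-folds rather than \cite{C-H}, which concerns maximal Albanese dimension.
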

\begin{proof} The first statement $\lambda(V)\leq 18$ is due to \cite[Theorem
4.8]{Explicit_II}.

When $q(V)>0$, $\lambda(V)\leq 3$ by Chen-Hacon \cite{C-H2}.

When $\chi(\OO_V)= 1$, $\lambda(V)\leq 10$ by \cite[Corollary
3.13]{Explicit_II}.

Finally when $\chi(\OO_V)<0$, $\lambda(V)\leq 3$ is a direct
consequence of Reid's plurigenus formula by Reid \cite{YPG} and by
Chen-Zuo  \cite[Lemma 4.1]{Chen-Zuo}.
\end{proof}

{}From now on, we classify a 3-fold $V$ into two types:{\em
\begin{itemize}
\item [(I)] $\chi(\OO_V)>1$ and $q(V)=0$;

\item [(II)] either $\chi(\OO_V)\leq 1$ or $q(V)>0$.
\end{itemize}}

\section{\bf Point separation on 4-folds}

\begin{prop}\label{4-eff} Assume $n=\dim (V)=4$ and $P_{m_0}(V)\geq 2$ for some
positive integer $m_0$. Keep the same notation as in \ref{setup}.
Let $Q_4$ be a nef $\bQ$-divisor on $V$ and $Q_4'$ a nef
$\bQ$-divisor on $X'$. Then
\begin{itemize}
\item[(1)] $K_{X'}+\roundup{q_4\pi^*(K_X)+Q_4'}+F$ is an effective divisor for all
rational numbers $q_4> 74m_0+37$;

\item[(2)] $mK_V+\roundup{Q}$ is effective for all integers $m\geq
75m_0+39.$
\end{itemize}
\end{prop}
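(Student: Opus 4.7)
The plan is to imitate the proof of Proposition~\ref{3-eff} one dimension higher, with Proposition~\ref{3-eff} itself playing on the 3-fold fiber the role that Lemma~\ref{pg1} played on the surface fiber in the 3-dimensional case. First I deduce (2) from (1): since $F\le M_0\le m_0\pi^*(K_X)$, effectivity of $K_{X'}+\roundup{q_4\pi^*(K_X)+Q_4'}+F$ implies effectivity of $K_{X'}+\roundup{(q_4+m_0)\pi^*(K_X)+Q_4'}$, which via the reduction~\ref{reduction} translates into effectivity of $|mK_V+\roundup{Q_4}|$ as soon as $m-1\ge q_4+m_0$; choosing $q_4$ rational and just above $74m_0+37$ gives $m\ge 75m_0+39$.

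For (1), I first modify $X'$ further so that the fractional part of $Q_4'$ has simple normal crossing support (condition $(\pounds)$ in the proof of Proposition~\ref{3-eff}). The Kawamata--Viehweg vanishing theorem applied to the nef and big $\bQ$-divisor $q_4\pi^*(K_X)+Q_4'$ then yields the surjection
\begin{align*}
H^0\bigl(X',K_{X'}+\roundup{q_4\pi^*(K_X)+Q_4'}+F\bigr)
\twoheadrightarrow
H^0\bigl(F, K_F+\roundup{(q_4\pi^*(K_X)+Q_4')|_F}\bigr),
\end{align*}
so it suffices to produce a nonzero section on a general fiber $F$ of $f$, which is a nonsingular 3-fold of general type. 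Let $\sigma\colon F\lra F_0$ be the contraction to the terminal minimal model. When $b=g(B)>0$, Lemma~\ref{b>0} gives $\pi^*(K_X)|_F\sim\sigma^*(K_{F_0})$; when $b=0$, $F_0$ need not be Gorenstein, so Lemma~\ref{b=0} is not directly available and I must fall back on the universal inequality $(1)$,
\begin{align*}
\pi^*(K_X)|_F\ge\frac{1}{2m_0+1}\sigma^*(K_{F_0}),
\end{align*}
which is the binding case for the numerical bound.

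To handle the fiber I apply Proposition~\ref{3-eff} to the 3-fold $F$ with parameter $\widetilde{m}_0:=\lambda(F)$, controlled by Lemma~\ref{nu3}. If $F$ is of Type II ($\chi(\OO_F)\le 1$ or $q(F)>0$), then $\widetilde{m}_0\le 10$, and absorbing the induced fiber $F'\le\widetilde{m}_0\sigma^*(K_{F_0})$ into the first assertion of Proposition~\ref{3-eff} yields effectivity of $K_F+\roundup{C\sigma^*(K_{F_0})+Q''}$ for every rational $C>3\widetilde{m}_0+2\le 32$. If $F$ is of Type I ($\chi(\OO_F)>1$ and $q(F)=0$), then $\widetilde{m}_0\le 18$, but \ref{pg} forces the surface sub-fiber of the $\widetilde{m}_0$-canonical fibration on $F$ to have $p_g>0$; Remark~\ref{fiber} then lowers the threshold to $C>2\widetilde{m}_0+1\le 37$. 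Taking the universal value $C=37$ and writing
\begin{align*}
q_4\pi^*(K_X)|_F\;=_{\bQ}\;C\sigma^*(K_{F_0})+\widetilde{R}_{q_4}+R_{q_4}
\end{align*}
with $R_{q_4}$ effective and $\widetilde{R}_{q_4}\equiv(q_4\beta-C)\sigma^*(K_{F_0})$ nef and big (where $\beta\ge\frac{1}{2m_0+1}$), the nef-big condition becomes $q_4>37(2m_0+1)=74m_0+37$; Proposition~\ref{3-eff} (or Remark~\ref{fiber}) applied on $F$ with nef piece $\widetilde{R}_{q_4}+Q_4'|_F$ then delivers the desired section on $F$ and closes~(1).

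The principal obstacle is that 3-fold terminal minimal models need not be Gorenstein, so the sharper bound $\beta\sim\frac{1}{m_0+1}$ of Lemma~\ref{b=0} is unavailable and one must use the coarser $\frac{1}{2m_0+1}$ of~$(1)$, which essentially doubles the coefficient of $m_0$ in the final constant. A secondary bookkeeping difficulty is keeping the fractional parts of every $\bQ$-divisor in simple normal crossing position through the sequence of birational modifications, since the vanishing theorem is invoked both on $X'$ and (after restriction) on the 3-fold fiber $F$.
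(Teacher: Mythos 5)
Your proposal is correct and follows essentially the same route as the paper: Kawamata--Viehweg vanishing to restrict to a general fiber $F$, Lemma \ref{b>0} together with inequality (1) to compare $\pi^*(K_X)|_F$ with $\sigma^*(K_{F_0})$, and then the type (I)/(II) dichotomy via Lemma \ref{nu3}, Remark \ref{fiber} and Proposition \ref{3-eff} to reach the thresholds $2\lambda(F)+1\le 37$ and $3\lambda(F)+2\le 32$, hence $q_4>37(2m_0+1)=74m_0+37$. Your explicit observations that Lemma \ref{b=0} is unavailable because the $3$-fold minimal model of $F$ need not be Gorenstein, and the bookkeeping deducing (2) from (1), match the paper's (terser) argument exactly.
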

\begin{proof} (2) is a direct result from (1) according to
\ref{reduction}. We only prove (1). Similar to assumption
($\pounds$) in the proof of Proposition \ref{3-eff}, we may assume
that $X'$ is good enough.

Because $q_4\pi^*(K_X)+Q_4'$ is nef and big, and its fractional
part has normal crossing supports by assumption, the
Kawamata-Viehweg vanishing theorem gives the surjective map:
$$H^0(X', K_{X'}+\roundup{q_4\pi^*(K_X)+Q_4'}+F)\lra H^0(F,
K_F+\roundup{q_4\pi^*(K_X)+Q_4'}|_F).$$ By Lemma \ref{b>0} and
inequality (1), one has
\begin{eqnarray*}
\roundup{q_4\pi^*(K_X)+Q_4'}|_F&\geq &
\roundup{(q_4\pi^*(K_X)+Q_4')|_F}\\
&\geq& \roundup{\frac{q_4}{2m_0+1}\sigma^*(K_{F_0})+{Q_4'}|_F}.
\end{eqnarray*}

If $F$ is of type (I), then, by Remark \ref{fiber}, \ref{pg} and
Lemma \ref{nu3}, we need to set $\frac{q_4}{2m_0+1}> 37\geq
(\lambda(F)+1)+\lambda(F)$, i.e. $q_4> 74m_0+37$, so that
$K_F+\roundup{\frac{q_4}{2m_0+1}\sigma^*(K_{F_0})+{Q_4'}|_F}$ is an
effective divisor on $F$.

If $F$ is of type (II), then by Proposition \ref{3-eff} and Lemma
\ref{nu3} we need $\frac{q_4}{2m_0+1}> 32\geq
(2\lambda(F)+2)+\lambda(F)$ so that
$K_F+\roundup{\frac{q_4}{2m_0+1}\sigma^*(K_{F_0})+{Q_4'}|_F}$ is
effective.

In a word, when $q_4> 74m_0+37$,
$K_{X'}+\roundup{q_4\pi^*(K_X)+Q_4'}+F$ is effective.
\end{proof}

\begin{thm}\label{4-folds} Assume $n=\dim (V)=4$ and $P_{m_0}\geq 2$ for some
positive integer $m_0$. Keep the same notation as in \ref{setup}.
Let $Q_4$ be a nef $\bQ$-divisor on $V$ and $Q_4'$ a nef
$\bQ$-divisor on $X'$.
 Then
\begin{itemize}
\item[(1)] $|K_{X'}+\roundup{q_4\pi^*(K_X)+Q_4'}+M_0|$ gives a
birational map for all rational numbers $q_4> 150m_0+75$;

\item[(2)] $\Phi_{|mK_V+\roundup{Q}|}$ is birational for all integers $m\geq
151m_0+77.$

\end{itemize}
\end{thm}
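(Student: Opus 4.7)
The plan is to mirror the proof of Theorem \ref{3folds}, using Theorem \ref{3folds} itself as the inductive input for the 3-fold fibers. As in \ref{setup}, the hypothesis $P_{m_0}\geq 2$ yields a pencil in $|m_0K_V|$ and an induced fibration $f\colon X'\to B$ whose general fiber $F$ is a nonsingular projective 3-fold of general type; the assumption $q_4>150m_0+75$ is well above the effectivity threshold $74m_0+37$ of Proposition \ref{4-eff}(1).

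For (1), Proposition \ref{4-eff}(1) gives that $K_{X'}+\roundup{q_4\pi^*(K_X)+Q_4'}$ is effective, and then Lemma \ref{diff} implies that $|K_{X'}+\roundup{q_4\pi^*(K_X)+Q_4'}+M_0|$ separates different general fibers of $f$. After a further modification of $X'$ making the fractional parts simple normal crossing, Kawamata--Viehweg vanishing applied to the nef and big $\bQ$-divisor $q_4\pi^*(K_X)+Q_4'+(M_0-F)$ (with $M_0-F$ nef, being a non-negative combination of fibers) together with $F|_F\sim 0$ and $M_0|_F\sim 0$ gives
\[
|K_{X'}+\roundup{q_4\pi^*(K_X)+Q_4'}+M_0||_F=|K_F+\roundup{q_4\pi^*(K_X)+Q_4'}|_F|,
\]
so it suffices to check birationality of the restricted system on $F$.

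By inequality (1), $q_4\pi^*(K_X)|_F\geq C\sigma^*(K_{F_0})$ with $C:=q_4/(2m_0+1)$. Now split $F$ according to Lemma \ref{nu3}. In type (I) one has $\lambda(F)\leq 18$ and, by Setup \ref{pg}, the general fiber of the canonically induced fibration on $F$ has positive geometric genus, so Theorem \ref{3folds}(2) applies to $F$, asking $q_3>3\lambda(F)+3$. To absorb the movable part $M_0^F$ of a pencil in $|\lambda(F)K_{F_0}|$ into the round-up I use $\lambda(F)\sigma^*(K_{F_0})=_{\bQ} M_0^F+E'$ with $E'$ effective, which yields
\[
|K_F+\roundup{C\sigma^*(K_{F_0})+Q_4'|_F}|\supseteq |K_F+\roundup{q_3\sigma^*(K_{F_0})+Q_4'|_F}+M_0^F|
\]
as soon as $C\geq q_3+\lambda(F)$. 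Hence $C>4\lambda(F)+3\leq 75$ suffices in type (I), i.e.\ $q_4>75(2m_0+1)=150m_0+75$. In type (II) one has $\lambda(F)\leq 10$ and Theorem \ref{3folds}(1) asks only $C>5\lambda(F)+4\leq 54$, which is weaker. This proves (1).

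For (2), given an integer $m\geq 151m_0+77$, take $q_4:=m-m_0-1\geq 150m_0+76>150m_0+75$, so (1) applies. Using $\pi^*(K_X)\leq K_{X'}$ and $M_0\leq m_0K_{X'}$, the birational system of (1) is a subsystem of $|mK_{X'}+\roundup{Q_4'}|$, and by \ref{reduction} birationality of the latter implies birationality of $|mK_V+\roundup{Q}|$. The main technical obstacle throughout is the rounding bookkeeping that rewrites the restricted system so as to absorb $M_0^F$ and land in the intermediate form of Theorem \ref{3folds}; the weaker direct statement ``$|mK_F+\roundup{Q}|$ birational for $m$ large'' is insufficient here, because the inequality $\sigma^*(K_{F_0})\leq K_F$ goes in the wrong direction to compare with $C\sigma^*(K_{F_0})$.
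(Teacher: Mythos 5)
Your proposal is correct and follows essentially the same route as the paper: effectivity via Proposition \ref{4-eff}, separation of fibers via Lemma \ref{diff}, Kawamata--Viehweg vanishing to restrict to $F$, inequality (1) to pass to $\sigma^*(K_{F_0})$, and the type (I)/(II) split with Theorem \ref{3folds}(2) resp.\ (1), arriving at the identical thresholds $4\lambda(F)+3\leq 75$ and $5\lambda(F)+4\leq 54$. Your explicit bookkeeping for absorbing $M_0^F$ into the round-up (and the closing remark on why the $\bQ$-divisor form of Theorem \ref{3folds} is needed) just makes precise what the paper compresses into the citation of Lemma \ref{yes}.
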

\begin{proof} Similar to assumption $(\pounds)$, we may assume
that $X'$ is good enough (after a necessary modification). Also (2)
is a direct result of (1). We only prove (1).

By Proposition \ref{4-eff}, we see that
$K_{X'}+\roundup{q_4\pi^*(K_X)+Q_4'}\geq 0$. Lemma \ref{diff} tells
us that we only need to verify the birationality of
$$\Phi_{|K_{X'}+\roundup{q_4\pi^*(K_X)+Q_4'}+M_0|}|_F$$ for a general
fiber $F$ of $f$. The vanishing theorem gives
$$|K_{X'}+\roundup{q_4\pi^*(K_X)+Q_4'}+M_0||_F=|K_F+\roundup{q_4\pi^*(K_X)+Q_4'}|_F|$$
noticing ${M_0}|_F\sim 0$. Lemma \ref{b>0} and inequality (1) imply
$$\roundup{q_4\pi^*(K_X)+Q_4'}|_F\geq
\roundup{\frac{q_4}{2m_0+1}\sigma^*(K_{F_0})+{Q_4'}|_F}.$$

Noting that $F$ is a threefold of general type, we still use a
similar argument to that in the proof of Proposition \ref{4-eff}.

If $F$ is of type (I), then, by Theorem \ref{3folds}(2), \ref{pg},
Lemma \ref{nu3} and Lemma \ref{yes},
$|K_F+\roundup{\frac{q_4}{2m_0+1}\sigma^*(K_{F_0})+{Q_4'}|_F}|$
gives a birational map when $$\frac{q_4}{2m_0+1}>75 \geq
(3\lambda(F)+3)+\lambda(F),$$ i.e. $q_4> 150m_0+75$.

If $F$ is of type (II), by Theorem \ref{3folds}(1) and Lemma
\ref{nu3},
$$\Phi_{|K_F+\roundup{\frac{q_4}{2m_0+1}\sigma^*(K_{F_0})+{Q_4'}|_F}|}$$
is birational when $\frac{q_4}{2m_0+1}> 54\geq
(4\lambda(F)+4)+\lambda(F)$, i.e. $q_4> 108m_0+54$.

To make a conclusion, $|K_{X'}+\roundup{q_4\pi^*(K_X)+Q_4'}+M_0|$
gives a birational map for all rational numbers $q_4> 150m_0+75$.
\end{proof}

A direct result of Theorem \ref{4-folds} is the following:

\begin{cor}\label{4} Assume $\dim (V)=4$ and $P_{m_0}\geq 2$
for some positive integer $m_0$. Then $\varphi_{m}$ is birational
onto its image for all integers $m\geq 151m_0+77$.
\end{cor}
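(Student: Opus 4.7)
The statement is a formal consequence of Theorem~\ref{4-folds}(2): take $Q=0$, which is trivially a nef $\bQ$-divisor, so that $\roundup{Q}=0$ and $\Phi_{|mK_V+\roundup{Q}|}$ coincides with $\varphi_m$. The threshold $m\geq 151m_0+77$ is then read off directly. Consequently, the entire content lies in having Theorem~\ref{4-folds} available, and my proof proposal amounts to a recapitulation of how one arrives at that theorem.

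The plan is to use the pencil $\Lambda\subset|m_0K_V|$ and the induced fibration $f\colon X'\to B$ of setup~\ref{setup}, whose general fiber $F$ is a $3$-fold of general type. First I would invoke the effectivity statement Proposition~\ref{4-eff} together with Lemma~\ref{diff} to reduce birationality of $|K_{X'}+\roundup{q_4\pi^*(K_X)+Q_4'}+M_0|$ (for a suitable rational $q_4$) to two substeps: separating distinct generic irreducible elements of the movable part $M_0$, which Lemma~\ref{diff} delivers once the auxiliary system is nonempty, and birationality of the restriction to a general fiber~$F$.

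For the restriction, Kawamata--Viehweg vanishing together with $M_0|_F\sim 0$ gives
\[
|K_{X'}+\roundup{q_4\pi^*(K_X)+Q_4'}+M_0||_F = |K_F+\roundup{q_4\pi^*(K_X)+Q_4'}|_F|,
\]
while Lemma~\ref{b>0} or inequality $(1)$ bounds $\pi^*(K_X)|_F\geq\tfrac{1}{2m_0+1}\sigma^*(K_{F_0})$; this is where the factor $2m_0+1$ enters. Splitting $F$ into types (I) and (II) as in~\ref{pg} and applying Theorem~\ref{3folds} with the bounds $\lambda(F)\leq 18$ (resp.\ $\leq 10$) from Lemma~\ref{nu3}, the binding type-(I) case requires $\tfrac{q_4}{2m_0+1}>4\lambda(F)+3=75$, i.e.\ $q_4>150m_0+75$. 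Translating back to the $m$-threshold via~\ref{reduction}, with the necessary $m_0$ extra room for the $+M_0$ summand and a further integer shift for $K_{X'}$ plus rounding, delivers $m\geq 151m_0+77$.

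The principal obstacle in this route is Proposition~\ref{4-eff}: establishing effectivity in dimension four requires the asymptotic bound $\beta_t\to\tfrac{p}{m_0+p}$ of Lemma~\ref{b=0} in the $b=0$ case, combined with the $3$-fold effectivity in Proposition~\ref{3-eff} applied fiberwise, and ultimately rests on the classification bound $\lambda\leq 18$ for $3$-folds of general type coming from Reid's plurigenus formula. Given all of this, extracting the present corollary from Theorem~\ref{4-folds} is mere specialization at $Q=0$.
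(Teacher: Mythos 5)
Your proposal is correct and matches the paper's own route: Corollary~\ref{4} is indeed read off from Theorem~\ref{4-folds}(2) by setting $Q=0$ (which is nef, with $\roundup{Q}=0$), exactly as the paper states. Your recapitulation of how Theorem~\ref{4-folds} is obtained — reduction to the fiber via Proposition~\ref{4-eff} and Lemma~\ref{diff}, the restriction bound $\pi^*(K_X)|_F\geq\tfrac{1}{2m_0+1}\sigma^*(K_{F_0})$, the type-(I)/(II) split with $\lambda\leq 18$ resp.\ $\leq 10$, and the binding type-(I) constraint $\tfrac{q_4}{2m_0+1}>4\lambda(F)+3=75$ (equivalently the paper's $(3\lambda(F)+3)+\lambda(F)$) yielding $q_4>150m_0+75$ and hence $m\geq 151m_0+77$ — is also accurate.
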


\section{\bf Proof of the main theorem}

We organize the proof according to the value of $\iota$.
\bigskip

First we consider the case $\iota\geq n-2$.

\begin{prop}\label{d1} Assume $n=\dim(V)\geq 3$,
$P_{m_0}(V)\geq 2$ for some positive integer $m_0$ and $\iota\geq
n-2$. Keep the same notation as in \ref{setup}. Let $Q_n$ be any
nef $\bQ$-divisor on $V$ and $Q_n'$ any nef $\bQ$-divisor on $X'$.
Then
\begin{itemize}
\item[(1)] $K_{X'}+\roundup{q_n\pi^*(K_X)+Q_n'}+(n-3)M_0+F$ is effective for
all rational numbers $q_n>\text{min}\{2m_0+2, 22\}\cdot
(2m_0+1)^{n-3}$;

\item[(2)] $mK_V+\roundup{Q_n}$ is effective for all integers $m\geq
\text{min}\{2m_0+2, 22\}\cdot (2m_0+1)^{n-3}+m_0(n-2)+2$.
\end{itemize}
\end{prop}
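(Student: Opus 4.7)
The plan is to prove (1) by induction on $n$ and then deduce (2) from (1) via the reduction of \ref{reduction}: each $M_0$ is bounded above by $m_0K_{X'}$ and $F\leq M_0$, so effectivity of the divisor in (1) yields effectivity of $(\roundup{q_n}+m_0(n-2)+1)K_{X'}+\roundup{Q_n'}$, which descends under $\pi_V$ to give $mK_V+\roundup{Q_n}$ effective for $m\geq\min\{2m_0+2,22\}(2m_0+1)^{n-3}+m_0(n-2)+2$.

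For the base case $n=3$, $(n-3)M_0=0$ and the claim is exactly Proposition \ref{3-eff}, yielding $q_3>2m_0+2$. The alternative universal bound $q_3>22$ comes from Lemma \ref{nu3}: for type (II) 3-folds, $\lambda(V)\leq 10$, so Proposition \ref{3-eff} applied with the smaller $m_0^*\leq 10$ gives $q_3>2m_0^*+2\leq 22$; for type (I) 3-folds, $\lambda(V)\leq 18$ and \ref{pg} (a general pencil fiber has $p_g>0$), combined with Remark \ref{fiber}, give $q_3>m_0^*+1\leq 19\leq 22$.

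For the inductive step with $n\geq 4$, consider the pencil fibration $f\colon X'\to B$ with general fiber $F$ of dimension $n-1$. First I verify that $F$ inherits the inductive hypotheses: a general hyperplane section argument for $\varphi_{m_0}(V)$ shows $\iota(F)\geq \iota(V)-1\geq (n-1)-2$, and consequently $\varphi_{m_0}|_F$ is non-constant so $P_{m_0}(F)\geq 2$. Applying Kawamata-Viehweg vanishing to the nef and big $\bQ$-divisor $q_n\pi^*(K_X)+Q_n'+(n-3)M_0$, after re-modifying $X'$ so its fractional part has simple normal crossings, yields the restriction surjection
$$H^0\bigl(X',K_{X'}+\roundup{q_n\pi^*K_X+Q_n'}+(n-3)M_0+F\bigr)\twoheadrightarrow H^0\bigl(F,K_F+\roundup{(q_n\pi^*K_X+Q_n')|_F}\bigr),$$
using $M_0|_F\sim 0$ since $M_0\sim pF$ and the normal bundle of a general smooth fiber is trivial. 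By inequality (1) (and Lemma \ref{b>0} when $b>0$), $\pi^*(K_X)|_F\geq\tfrac{1}{2m_0+1}\sigma^*(K_{F_0})$, so non-emptiness on $F$ reduces to effectivity of $K_F+\roundup{\tfrac{q_n}{2m_0+1}\sigma^*K_{F_0}+Q_n'|_F}$. Applying the inductive statement (1) on $F$ -- whose accompanying pencil fibration in $|m_0K_F|$ carries the slack $(n-4)M_0^F+F_F$ that precisely matches, under restriction from $X'$, the $(n-3)M_0+F$ slack in (1) -- this effectivity holds for $\tfrac{q_n}{2m_0+1}>\min\{2m_0+2,22\}(2m_0+1)^{n-4}$, giving $q_n>\min\{2m_0+2,22\}(2m_0+1)^{n-3}$ as required.

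The main obstacle is the careful bookkeeping of this iterated procedure. One must re-modify $X'$ to simultaneously resolve all the nested pencil fibrations of $V,F,F_F,\ldots$ down to the innermost 3-dimensional fiber, and verify via repeated applications of Kawamata-Viehweg vanishing that at every level the $M_0$-slack propagates faithfully from one stage to the next, so that each layer contributes exactly one multiplicative factor $(2m_0+1)$ without introducing any extraneous additive contribution that would degrade the target bound.
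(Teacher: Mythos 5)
Your overall architecture (induction on $n$; base case from Proposition \ref{3-eff} and Remark \ref{fiber}, with the constant $22$ extracted from Lemma \ref{nu3}; restriction to $F$ by Kawamata--Viehweg vanishing; rescaling by $\frac{1}{2m_0+1}$ via inequality (1) and Lemma \ref{b>0}) is the same as the paper's, but the inductive step as written has a genuine gap, and it sits exactly at the point your last paragraph gestures at. You discard the auxiliary divisors upon restriction --- your surjection lands in $H^0(F,K_F+\roundup{(q_n\pi^*K_X+Q_n')|_F})$ ``using $M_0|_F\sim 0$'' --- and then invoke statement (1) in dimension $n-1$ to conclude that $K_F+\roundup{\frac{q_n}{2m_0+1}\sigma^*(K_{F_0})+{Q_n'}|_F}$ is effective. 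It does not give that: the inductive statement only asserts effectivity of that divisor \emph{plus} the slack $(n-4)M_0^F+F^F$ coming from a pencil on $F$, and effectivity of $D+E$ with $E$ effective never implies effectivity of $D$. Already for $n=4$ this matters: Proposition \ref{3-eff} gives $K_F+\roundup{q\sigma^*(K_{F_0})+\cdot}+F^F\geq 0$ for $q>2m_0+2$, not the same without $F^F$; absorbing $F^F\leq m_0\sigma^*(K_{F_0})$ would visibly degrade the constant. Moreover your parenthetical claim that the slack ``precisely matches under restriction'' is incompatible with your own assertion $M_0|_F\sim 0$: a divisor restricting to zero cannot supply the nonzero slack the induction requires on $F$.

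The repair --- and the actual content of the paper's argument --- is to carry the slack through the restriction: the target of the vanishing-theorem surjection must be $H^0(F,K_F+\roundup{q_n\pi^*(K_X)+Q_n'}|_F+(n-3){M_0}|_F)$, and one writes $(n-3){M_0}|_F=(n-4){M_0}|_F+{M_0}|_F$ and chooses the next pencil $\Lambda_F$ inside $|{M_0}|_F|$, so that $(n-4){M_0}|_F+{M_0}|_F\geq (n-4)M_0^F+F^F$. For this to make sense $M_0$ must be read as the movable part of the full system $|m_0K|$ (with $\iota$-dimensional image), not of the sub-pencil $\Lambda$ alone; the hypothesis $\iota\geq n-2$ is exactly what guarantees $\dim \Phi_{|{M_0}|_F|}(F)\geq n-3\geq 1$, hence that $|{M_0}|_F|$ still contains a pencil and ${M_0}|_F\not\sim 0$. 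In your version the hypothesis $\iota\geq n-2$ is never used in the inductive step, which is a symptom of the problem. Please rewrite the restriction step accordingly and justify the comparison of $(n-3){M_0}|_F$ with the slack demanded by the inductive hypothesis on $F$.
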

\begin{proof} Noting that (2) is a direct application of (1), we
only prove (1). We are going to do an induction on $n$.

When $n=3$, Proposition \ref{3-eff} says that the statement is true
when $q_3>2m_0+2=(2m_0+2)\cdot(2m_0+1)^{n-3}+m_0(n-3)$. On the
other hand, we may replace $m_0$ with $\lambda(V)$. In fact, Lemma
\ref{nu3} gives $\lambda(V)\leq 18$ for type (I) and
$\lambda(V)\leq 10$ for type (II). Thus, by Proposition \ref{3-eff}
and Remark \ref{fiber}, $K_{X'}+\roundup{q_3\pi^*(K_X)+Q_3'}+F$ is
effective whenever $q_3>22$. Therefore statement (1) is true for
$q_3>\text{min}\{2m_0+2, 22\}\cdot (2m_0+1)^{n-3}+m_0(n-3)$.

Assume that statement (1) is correct for varieties of dimension
$\leq n-1$. Starting with a good model $X'$ satisfying \ref{aasum}, we can do the induction.
Noticing that $F$ is of
dimension $n-1$, we hope to reduce the problem onto $F$. Since
$\iota\geq n-2$, we have $\dim \varphi_{m_0}(F)\geq \dim
\Phi_{|M_0|}(F)\geq n-3=\dim (F)-2$ by the simple additivity
property. Because ${M_0}|_F\leq m_0{K_{X'}}|_F\sim m_0K_F$, we know
$\Phi_{|m_0K_F|}(F)\geq \Phi_{|{M_0}|_F|}(F)\geq \dim (F)-2$.
Because $q_n\pi^*(K_X)+Q_n'$ is nef and big and has simple normal
crossing fractional parts, the vanishing theorem gives the
surjective map:
\begin{eqnarray*}
&&H^0(X',K_{X'}+\roundup{q_n\pi^*(K_X)+Q_n'}+(n-3)M_0+F)\\
&\rightarrow& H^0(F, K_F+\roundup{q_n\pi^*(K_X)+Q_n'}|_F+(n-3){M_0}|_F)\\
&\supset& H^0(F, K_F+\roundup{q_n\pi^*(K_X)|_F+{Q_n'}|_F}+(n-3){M_0}|_F)\\
&\supset& H^0(F,
K_F+\roundup{q_n'\sigma^*(K_{F_0})+{Q_n'}|_F}+(n-4){M_0}|_F+{M_0}|_F)
\ \ \ \ (4)
\end{eqnarray*}
where $q_n'\geq \frac{q_n}{2m_0+1}>\text{min}\{2m_0+2, 22\}\cdot
(2m_0+1)^{(n-1)-3}$ by Lemma \ref{b>0} and inequality (1). Because
${M_0}|_F\leq m_0K_F$ and by taking those pencil $\Lambda_F\subset
|M_0||_F$, the induction and Lemma \ref{yes} tell us that
$K_F+\roundup{q_n'\sigma^*(K_{F_0})+{Q_n'}|_F}+((n-1)-3){M_0}|_F+{M_0}|_F$
is effective
 whenever $q_n'> \text{min}\{2m_0+2, 22\}\cdot
(2m_0+1)^{(n-1)-3}$. We are done.
\end{proof}

\begin{thm}\label{b1} Assume $n=\dim(V)\geq 3$,
$P_{m_0}(V)\geq 2$ for some positive integer $m_0$ and $\iota\geq
n-2$. Keep the same notation as in \ref{setup}. Let $Q_n$ be any
nef $\bQ$-divisor on $V$ and $Q_n'$ any nef $\bQ$-divisor on $X'$.
Then
\begin{itemize}
\item[(1)] $|K_{X'}+\roundup{q_n\pi^*(K_X)+Q_n'}+(n-2)M_0|$ gives a birational map for
all rational numbers $$q_n>\text{min}\{4m_0+4,
57\}\cdot(2m_0+1)^{n-3};$$

\item[(2)] $|mK_V+\roundup{Q_n}|$ gives a birational map for all
integers
$$m\geq  \text{min}\{4m_0+4,
57\}\cdot (2m_0+1)^{n-3}+m_0(n-2)+2.$$
\end{itemize}
\end{thm}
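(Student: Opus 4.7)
The plan is induction on $n$, following the structure of Proposition \ref{d1} but upgrading the conclusion from non-emptiness to birationality. Part (2) of the theorem follows from (1) via the reduction in \ref{reduction}: since $(n-2)M_0 \leq (n-2)m_0\pi^*(K_X)$, one has $|K_{X'} + \roundup{(m-1)\pi^*(K_X) + Q_n'}| \supseteq |K_{X'} + \roundup{q_n\pi^*(K_X) + Q_n'} + (n-2)M_0|$ with $q_n = m - 1 - (n-2)m_0$, so (1) applied to this $q_n$ delivers (2). For the base case $n = 3$, Theorem \ref{3folds}(1) gives birationality of $|K_{X'} + \roundup{q_3\pi^*(K_X) + Q_3'} + M_0|$ whenever $q_3 > 4m_0 + 4$. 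The alternative bound $q_3 > 57$ comes from replacing $m_0$ by $\lambda(V)$ (permissible since $P_{\lambda(V)}(V) \geq 2$ and $\lambda(V) \leq m_0$): for type (I) 3-folds, Lemma \ref{nu3} gives $\lambda(V) \leq 18$, \ref{pg} guarantees $p_g(F) > 0$, and Theorem \ref{3folds}(2) yields $q_3 > 3\lambda(V) + 3 \leq 57$; for type (II), $\lambda(V) \leq 10$ and Theorem \ref{3folds}(1) gives $q_3 > 4\lambda(V) + 4 \leq 44$. Thus $q_3 > \min\{4m_0+4, 57\}$ suffices.

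For the inductive step with $n \geq 4$, first re-modify $X'$ so that the assumptions of \ref{aasum} are satisfied, $|M_0|$ is base-point-free, and the fractional part of $Q_n'$ has SNC support. Then argue in two parts. To separate two general fibers $F_1, F_2$ of $f \colon X' \to B$, note that our $q_n$ exceeds the threshold of Proposition \ref{d1}, so $K_{X'} + \roundup{q_n\pi^*(K_X) + Q_n'} + (n-3)M_0 + F$ is effective; one then applies Lemma \ref{diff} with pencil $\Lambda$ and the nef-big $\bQ$-divisor $R = q_n\pi^*(K_X) + Q_n' + (n-3)M_0$, or equivalently uses Kawamata-Viehweg vanishing directly after subtracting $F_1 + F_2$, since $(n-2)M_0 - F_1 - F_2 \equiv ((n-2)p - 2)F$ is nef for $n \geq 4$. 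To prove birationality on a general fiber $F$, KV vanishing yields a restriction surjection whose image contains $|K_F + \roundup{q_n\pi^*(K_X)|_F + Q_n'|_F}|$; by Lemma \ref{b>0} if $b > 0$, or by Lemma \ref{b=0} together with inequality (1) if $b = 0$, one converts $q_n\pi^*(K_X)|_F$ into $q_n'\sigma^*(K_{F_0})$ plus an effective error, where $q_n' = q_n/(2m_0+1)$. Since $\iota \geq n - 2$ implies $\iota_F \geq n - 3 \geq 1$ for $n \geq 4$, a pencil $\Lambda_F \subset |m_0 K_F|$ exists with movable part $M_0^F$, and the inductive hypothesis for dimension $n - 1$ applies because $q_n' > \min\{4m_0+4, 57\}(2m_0+1)^{n-4}$, giving birationality on $F$.

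The main obstacle lies in the second part: matching the term $(n-3)M_0^F$ required by the inductive statement on $F$ with what the restriction from $X'$ naturally provides, since the literal restriction of $(n-2)M_0$ to a general fiber is numerically trivial. As in the proof of Proposition \ref{d1}, this requires interpreting the restriction via the intrinsic pencil $\Lambda_F \subset |m_0 K_F|$ on $F$, peeling off one copy of $M_0^F$ to play the role of the extra general fiber appearing in the inductive statement, and verifying that no multiplicative loss appears in the bound $q_n > \min\{4m_0+4, 57\}(2m_0+1)^{n-3}$. A secondary concern is bookkeeping: the birational model $X'$ must be chosen, by further modifications, to simultaneously dominate all relevant minimal models, carry SNC fractional divisors, and support the base-point-free pencil from $\Lambda$, without inflating the integer thresholds.
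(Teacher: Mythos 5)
Your overall architecture---deducing (2) from (1) via $(n-2)m_0\pi^*(K_X)\geq(n-2)M_0$, basing the induction at $n=3$ on Theorem \ref{3folds} with the $\min\{4m_0+4,57\}$ dichotomy coming from Lemma \ref{nu3} and \ref{pg}, separating fibers via Proposition \ref{d1} and Lemma \ref{diff}, and restricting to $F$ by Kawamata--Viehweg together with Lemma \ref{b>0} and inequality (1)---is exactly the paper's. But the step you yourself flag as ``the main obstacle'' is a genuine gap, and the remedy you sketch (peeling $(n-3)M_0^F$ off the pull-back of $K_X$) does not close it: if, as you assume, $(n-2){M_0}|_F$ is numerically trivial, then the only source for the term $((n-1)-2)M_0^F$ demanded by the inductive statement on $F$ is $q_n\pi^*(K_X)|_F$ itself; since $M_0^F\leq m_0\sigma^*(K_{F_0})$ this forces $q_n/(2m_0+1)>\min\{4m_0+4,57\}(2m_0+1)^{n-4}+(n-3)m_0$, an additive loss of at least $(n-3)m_0(2m_0+1)$ already at the first descent, and the stated constant is never reached.

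The resolution, implicit in relation (4) of Proposition \ref{d1}, is that $M_0$ here must be read as the movable part of the full system $\pi_V^*|m_0K_V|$ rather than of the pencil $\Lambda$: the pencil is chosen inside $|M_0|$, so $f$ is still induced by it, but now $\dim\Phi_{|M_0|}(X')=\iota\geq n-2$, hence $\dim\Phi_{|{M_0}|_F|}(F)\geq n-3\geq 1$ in the inductive step and ${M_0}|_F$ is not numerically trivial. Choosing the new pencil $\Lambda_F$ inside $|{M_0}|_F|\subset|m_0K_F|$, its movable part $N_0$ satisfies $N_0\leq {M_0}|_F$ as effective divisors (the difference being the fixed part of $\Lambda_F$), so $(n-3){M_0}|_F\geq((n-1)-2)N_0$ is delivered for free by the honest restriction of $(n-3)M_0$, while the remaining copy $M_0\geq F$ is consumed by the vanishing theorem. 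With this reading there is no loss at all, and the induction closes with $q_n'=q_n/(2m_0+1)>\min\{4m_0+4,57\}(2m_0+1)^{(n-1)-3}$, which is exactly the threshold you need; the rest of your argument (the reduction of (2) to (1), the $n=3$ base case, and the fiber-separation step) agrees with the paper.
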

\begin{proof} Again since (2) is a direct application of (1), we
only prove (1). We are going to do an induction on $n$.

Under the assumption $q_n>\text{min}\{4m_0+4,
57\}\cdot(2m_0+1)^{n-3}$, since $m_0\pi^*(K_X)\geq M_0$, we see
\begin{eqnarray*}
&&K_{X'}+\roundup{q_n\pi^*(K_X)+Q_n'}+(n-3)M_0\\
&\geq&
K_{X'}+\roundup{\widetilde{q}_n\pi^*(K_X)+Q_n'}+(n-3)M_0+F\geq 0
\end{eqnarray*}
by Proposition \ref{d1} since
$\widetilde{q}_n:=q_n-m_0>\text{min}\{2m_0+2, 22\}\cdot
(2m_0+1)^{n-3}$. According to Lemma \ref{diff},
$|K_{X'}+\roundup{q_n\pi^*(K_X)+Q_n'}+(n-2)M_0|$ can distinguish
different fibers of $f$. We are left to show the birationality of
the rational map given by
$|K_{X'}+\roundup{q_n\pi^*(K_X)+Q_n'}+(n-2)M_0||_F$ for a general
fiber $F$ of $f$.

When $n=3$ and $q_3>4m_0+4=(4m_0+4)\cdot (2m_0+1)^{n-3}$, the
statement is nothing but Theorem \ref{3folds}(1). On the other hand,
we may replace $m_0$ with $\lambda(V)$. In fact, Lemma \ref{nu3}
gives $\lambda(V)\leq 18$ for type (I) and $\lambda(V)\leq 10$ for
type (II). Then, by Theorem \ref{3folds}(1) and (2),
$|K_{X'}+\roundup{q_n\pi^*(K_X)+Q_n'}+(n-2)M_0|$ gives a birational
map for all rational numbers $q_3>\text{max}\{57,44\}=57$. Therefore
statement (1) is true whenever $q_3>\text{min}\{4m_0+4, 57\}$.

Assume that statement (1) is correct for varieties of dimension
$\leq n-1$. Starting with a good model $X'$ satisfying \ref{aasum}, we can do the induction again. Still, we see
$\Phi_{|m_0K_F|}(F)\geq \Phi_{|{M_0}|_F|}(F)\geq \dim (F)-2$. We
hope to reduce to the problem on $F$. According to the relation
(4), we only need to study
$$|J_{n-1}|:=|K_F+\roundup{q_n'\sigma^*(K_{F_0})+{Q_n'}|_F}+(n-4){M_0}|_F+{M_0}|_F|$$
where $q_n'\geq \frac{q_n}{2m_0+1}>\text{min}\{4m_0+4, 57\}\cdot
(2m_0+1)^{(n-1)-3}$ by Lemma \ref{b>0} and inequality (1). Because
${M_0}|_F\leq m_0K_F$ and by taking those pencil $\Lambda_F\subset
|M_0||_F$, the induction and Lemma \ref{yes} tell us that
$|K_F+\roundup{q_n'\sigma^*(K_{F_0})+{Q_n'}|_F}+((n-1)-2){M_0}|_F|$
gives a birational map. Thus
$|K_{X'}+\roundup{q_n\pi^*(K_X)+Q_n'}+(n-2)M_0||_F$ gives a
birational map. We are done.
\end{proof}

Next we discuss the case $\iota=n-3$.

\begin{prop}\label{d2} Assume $n=\dim(V)\geq 4$,
$P_{m_0}(V)\geq 2$ for some positive integer $m_0$ and $\iota\geq
n-3$. Keep the same notation as in \ref{setup}. Let $Q_n$ be any
nef $\bQ$-divisor on $V$ and $Q_n'$ any nef $\bQ$-divisor on $X'$.
Then
\begin{itemize}
\item[(1)] $K_{X'}+\roundup{q_n\pi^*(K_X)+Q_n'}+(n-4)M_0+F$ is effective for
all rational numbers $q_n>37(2m_0+1)^{n-3}$;

\item[(2)] $mK_V+\roundup{Q_n}$ is effective for all integers $m\geq
37(2m_0+1)^{n-3}+m_0(n-3)+2$.
\end{itemize}
\end{prop}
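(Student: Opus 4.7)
The plan is to mirror Proposition \ref{d1}'s proof, replacing its 3-dimensional base case with the new 4-dimensional input supplied by Proposition \ref{4-eff}(1). Statement (2) follows from (1) by the reduction in \ref{reduction}, so I focus on (1) and induct on $n$. For the base case $n=4$, the bound reads $q_4 > 37(2m_0+1) = 74m_0+37$, the multiplicity $(n-4)M_0$ vanishes, and the hypothesis $\iota\geq 1$ is automatic because $P_{m_0}\geq 2$; thus the statement is precisely Proposition \ref{4-eff}(1).

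For the inductive step $n\geq 5$, first modify $X'$ so that the fractional part of $Q_n'$ has simple normal crossing support, just as in assumption $(\pounds)$ from the proof of Proposition \ref{3-eff}. Since $q_n\pi^*(K_X)+Q_n'$ is nef and big, the Kawamata-Viehweg vanishing theorem yields the surjection
$$H^0(X',\,K_{X'}+\roundup{q_n\pi^*(K_X)+Q_n'}+(n-4)M_0+F) \twoheadrightarrow H^0(F,\,K_F+\roundup{q_n\pi^*(K_X)+Q_n'}|_F+(n-4){M_0}|_F),$$
so it suffices to produce a nonzero section on $F$. Lemma \ref{b>0} (for $b>0$) together with inequality $(1)$ (for $b=0$) gives $\pi^*(K_X)|_F\geq \frac{1}{2m_0+1}\sigma^*(K_{F_0})$, so the image system on $F$ contains
$$|K_F+\roundup{q_n'\sigma^*(K_{F_0})+{Q_n'}|_F}+((n-1)-4){M_0}|_F+{M_0}|_F|$$
with $q_n'\geq \frac{q_n}{2m_0+1} > 37(2m_0+1)^{(n-1)-3}$.

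To close the induction, I apply Proposition \ref{d2} in dimension $n-1$ on the general fiber $F$, a smooth projective $(n-1)$-fold of general type. Because $\iota\geq n-3$, the restriction $\varphi_{m_0}|_F$ has image of dimension at least $\iota-1\geq (n-1)-3$, so the $m_0$-canonical dimension of $F$ meets the required hypothesis $\iota_F\geq (n-1)-3$. For $n\geq 5$ one has $\dim\varphi_{m_0}(F)\geq 1$, so a pencil $\Lambda_F\subset |{M_0}|_F|\subset |m_0 K_F|$ exists; the extra copy of ${M_0}|_F$ above absorbs a general fiber of the induced fibration on $F$, and Lemma \ref{yes} handles any further birational modification of $F$. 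The main obstacle is the bookkeeping: one must verify that the numerical bound degrades cleanly under the dimension drop, which it does because $37(2m_0+1)^{n-3}=(2m_0+1)\cdot 37(2m_0+1)^{(n-1)-3}$, and that the pencil hypothesis on $F$ (in particular $P_{m_0}(F)\geq 2$) is genuinely inherited from $\iota\geq n-3$.
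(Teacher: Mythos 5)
Your proposal is correct and follows essentially the same route as the paper's own proof: induction on $n$ with Proposition \ref{4-eff}(1) as the base case, restriction to a general fiber $F$ via the Kawamata--Viehweg vanishing theorem, and the inequality $\pi^*(K_X)|_F\geq\frac{1}{2m_0+1}\sigma^*(K_{F_0})$ from Lemma \ref{b>0} and inequality (1) to carry the bound $37(2m_0+1)^{n-3}$ down one dimension. The extra care you take in checking that the hypothesis $\iota_F\geq(n-1)-3$ is inherited on $F$ is exactly the point the paper states more tersely via $\Phi_{|m_0K_F|}(F)\geq\Phi_{|{M_0}|_F|}(F)\geq n-4$.
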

\begin{proof} Statement (1) implies (2). So we only prove (1). Similar to the assumption $(\pounds)$, we
may assume that $X'$ is good enough (modulo blow-ups) for our
purpose.  We prove by an induction on $n$.

When $n=4$, (1) is exactly Proposition \ref{4-eff}(1).

Assume that (1) is correct for all varieties of dimension $n-1$.
Pick a general fiber $F$ of $f$. Because ${M_0}|_F\leq
m_0{K_{X'}}|_F\sim m_0K_F$, we know $\Phi_{|m_0K_F|}(F)\geq
\Phi_{|{M_0}|_F|}(F)\geq n-4=\dim (F)-3$. As long as we take those
pencils $\Lambda_F\subset |M_0||_F$ on $F$, the induction works on
$F$. Thus we restrict everything onto $F$. By the vanishing
theorem, we may get the similar relation to (4):
\begin{eqnarray*}
&&|K_{X'}+\roundup{q_n\pi^*(K_X)+Q_n'}+(n-4)M_0+F||_F\\
&\supset&
|K_F+\roundup{q_n'\sigma^*(K_{F_0})+{Q_n'}|_F}+((n-1)-4){M_0}|_F+{M_0}|_F)|
\end{eqnarray*}
where $q_n'\geq \frac{q_n}{2m_0+1}>37(2m_0+1)^{(n-1)-3}$ by Lemma
\ref{b>0} and inequality (1). The later linear system is non-empty
by induction. Therefore
$K_{X'}+\roundup{q_n\pi^*(K_X)+Q_n'}+(n-4)M_0+F$ is effective for
all rational numbers $q_n>37(2m_0+1)^{n-3}$.
\end{proof}

\begin{thm}\label{b2} Assume $n=\dim(V)\geq 4$,
$P_{m_0}(V)\geq 2$ for some positive integer $m_0$ and $\iota\geq
n-3$. Keep the same notation as in \ref{setup}. Let $Q_n$ be any
nef $\bQ$-divisor on $V$ and $Q_n'$ any nef $\bQ$-divisor on $X'$.
Then
\begin{itemize}
\item[(1)] $K_{X'}+\roundup{q_n\pi^*(K_X)+Q_n'}+(n-4)M_0+F$ is effective for
all rational numbers $q_n>75(2m_0+1)^{n-3}$;

\item[(2)] $mK_V+\roundup{Q_n}$ is effective for all integers $m\geq
75(2m_0+1)^{n-3}+m_0(n-3)+2$.
\end{itemize}
\end{thm}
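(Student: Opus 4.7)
Part (2) reduces to part (1) by the passage from $V$ to $X'$ explained in \ref{reduction}, exactly as in Propositions \ref{3-eff}, \ref{4-eff}, \ref{d1} and \ref{d2}. So the substance is all in (1). I note at the outset that, under precisely the same hypotheses, Proposition \ref{d2}(1) already yields effectivity under the strictly stronger condition $q_n > 37(2m_0+1)^{n-3}$; since $75 > 37$, part (1) is in fact a formal consequence of Proposition \ref{d2}(1). Even so, I prefer to present the self-contained induction, both because it parallels the birationality half of the $\iota = n-3$ case and because the constant $75$ is what the next step of the main theorem wants to see as numerical input.

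The plan is induction on $n$, following the template of Proposition \ref{d2}. After one further blow-up of $X'$ I may assume (condition ($\pounds$)) that $Q_n'$ and $\pi^*(K_X)$ have simple normal crossing fractional parts, so that $q_n\pi^*(K_X) + Q_n'$ is nef and big with simple normal crossing fractional support. The Kawamata--Viehweg vanishing theorem then yields the surjection
\[
H^0\bigl(X',\, K_{X'} + \roundup{q_n\pi^*(K_X)+Q_n'} + (n-4)M_0 + F\bigr) \twoheadrightarrow H^0\bigl(F,\, K_F + \roundup{q_n\pi^*(K_X)+Q_n'}\bigr|_F + (n-4){M_0}|_F\bigr),
\]
and Lemma \ref{b>0} together with inequality (1) gives $\pi^*(K_X)|_F \geq \tfrac{1}{2m_0+1}\sigma^*(K_{F_0})$, so the restricted linear system contains
\[
\bigl|\,K_F + \roundup{q_n'\sigma^*(K_{F_0})+{Q_n'}|_F} + ((n-1)-4){M_0}|_F + {M_0}|_F\,\bigr|
\]
with $q_n' \geq q_n/(2m_0+1) > 75(2m_0+1)^{(n-1)-3}$. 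The base case $n=4$ is the effectivity of $K_{X'}+\roundup{q_4\pi^*(K_X)+Q_4'}+F$ for $q_4 > 150m_0+75$, which is Proposition \ref{4-eff}(1) (whose threshold is the smaller $74m_0+37$).

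For the inductive step, the hypothesis $\iota \geq n-3$ combined with ${M_0}|_F \leq m_0 K_F$ and the simple additivity already used in \ref{d1} and \ref{d2} forces $\dim\Phi_{|m_0K_F|}(F) \geq \dim\Phi_{|{M_0}|_F|}(F) \geq (n-1)-3$, so $F$ (of dimension $n-1$) satisfies the hypothesis of the theorem one dimension lower, with any pencil $\Lambda_F \subset |{M_0}|_F|$ playing the role of the initial pencil and ${Q_n'}|_F$ in place of $Q_n'$. Applying the induction hypothesis on $F$ then produces a section of the smaller system, which lifts through the surjection above to the required section on $X'$. The main delicacy, and the only real obstacle to a rigorous write-up, is verifying that ${Q_n'}|_F$ really is a nef $\bQ$-divisor on $F$ whose fractional part has simple normal crossing support, so that the induction hypothesis applies in the exact form required; this is the standard by-product of choosing $F$ general and of the preparatory blow-up built into condition ($\pounds$). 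Everything else is mechanical, because the constant $75$ is already looser than the one that the inductive counting strictly demands.
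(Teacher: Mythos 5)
Your observation that part (1) as printed is a formal consequence of Proposition \ref{d2}(1) is correct, and your self-contained induction for that literal statement is sound as far as it goes. But that very observation should have been treated as a red flag rather than a curiosity: a ``theorem'' that is strictly weaker than the proposition immediately preceding it, yet is given a proof described as ``parallel to that of Proposition \ref{d2}'' rather than ``immediate from Proposition \ref{d2}'', is almost certainly misstated. Compare the pattern everywhere else in the paper: Proposition \ref{d1} (effectivity, constant $\min\{2m_0+2,22\}\cdot(2m_0+1)^{n-3}$) is followed by Theorem \ref{b1} (birationality, constant $\min\{4m_0+4,57\}\cdot(2m_0+1)^{n-3}$); Proposition \ref{4-eff} (effectivity, $74m_0+37=37(2m_0+1)$) is followed by Theorem \ref{4-folds} (birationality, $150m_0+75=75(2m_0+1)$). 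Theorem \ref{b2} occupies exactly that slot for the case $\iota=n-3$, its constant $75(2m_0+1)^{n-3}$ restricts at $n=4$ to the birationality threshold of Theorem \ref{4-folds}(1), and its part (2) is the only possible source for case (2) of Theorem \ref{main}, which asserts \emph{birationality} of $|mK_V+\roundup{Q}|$ for $m\geq 75(2m_0+1)^{n-3}+m_0(n-3)+2$. The intended content is therefore that $|K_{X'}+\roundup{q_n\pi^*(K_X)+Q_n'}+(n-3)M_0|$ gives a birational map for all rational $q_n>75(2m_0+1)^{n-3}$, and that $|mK_V+\roundup{Q_n}|$ is birational in the stated range.

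Your argument does not address that content. What is actually needed is the upgrade analogous to the passage from Proposition \ref{d1} to Theorem \ref{b1}: first, Proposition \ref{d2}(1) together with $m_0\pi^*(K_X)\geq M_0$ gives $K_{X'}+\roundup{(q_n-m_0)\pi^*(K_X)+Q_n'}+(n-4)M_0+F\geq 0$, so Lemma \ref{diff} shows that the system distinguishes generic irreducible elements of $|M_0|$; second, one restricts to a general fiber $F$ via Kawamata--Viehweg vanishing and runs the same induction you set up, but with the base case and inductive input being the \emph{birationality} statements (Theorem \ref{4-folds}(1) at $n=4$, where $q_4>150m_0+75$ is precisely what $q_n>75(2m_0+1)^{n-3}$ supplies through inequality (1) and Lemma \ref{b>0}), not the effectivity ones. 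This also explains the constant: $75$ is not ``looser than the inductive counting strictly demands'' --- it is exactly what the birational base case demands, and the apparent slack relative to $37$ disappears once the statement is read as intended.
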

\begin{proof} The proof is parallel to that of Proposition \ref{d2}.
To avoid unnecessary redundancy, we omit the details.
\end{proof}

\begin{defn} The sequences $\{u_t\}_{t=4}^n$ and $\{w_t\}_{t=4}^n$ are defined by the following rules:
\begin{itemize}
\item $\widetilde{\lambda}_n=m_0$ and,
for all $i<n$, $\widetilde{\lambda}_i=\lambda_i$;

\item $u_4=75\widetilde{\lambda}_4+37$ and  $w_4=151\widetilde{\lambda}_4+75$;

\item for all $i$, $u_i=\widetilde{\lambda}_i+u_{i-1}(2\widetilde{\lambda}_i+1)$
and $w_i=\widetilde{\lambda}_i+w_{i-1}(2\widetilde{\lambda}_i+1)$.
\end{itemize}
\end{defn}

{}Finally we study the case $\iota\leq n-4$. We begin with the case
$\iota=1$.

\begin{thm}\label{d31} Assume $n=\dim (V)\geq 4$ and
$P_{m_0}\geq 2$ for some positive integer $m_0$ and $\iota\geq 1$.
Keep the same notation as in \ref{setup}. Let $Q_n$ be any nef
$\bQ$-divisor on $V$ and $Q_n'$ any nef $\bQ$-divisor on $X'$. Then
\begin{itemize}
\item[(1)] $K_{X'}+\roundup{\widetilde{q}_n\pi^*(K_X)+Q_n'}$ is effective for
all rational numbers $\widetilde{q}_n>u_n$.

\item[(2)] $|K_{X'}+\roundup{\widetilde{q}_n\pi^*(K_X)+Q_n'}|$ gives a
birational map for all rational numbers $\widetilde{q}_n>w_n$. 

\item[(3)] $|mK_V+\roundup{Q_n}|$ gives a birational map for all integers $m\geq
w_n+2.$
\end{itemize}
\end{thm}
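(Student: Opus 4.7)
The plan is to prove (1) and (2) by simultaneous induction on $n$, starting at the base case $n=4$, and then to deduce (3) from (2) via the reduction in \ref{reduction}. The base case is essentially a reformulation of Proposition \ref{4-eff}(1) and Theorem \ref{4-folds}(1): since $u_4=75\widetilde{\lambda}_4+37=(74m_0+37)+m_0$ and $w_4=151\widetilde{\lambda}_4+75=(150m_0+75)+m_0$, setting $\widetilde{q}_4=q_4+m_0$ converts the effectivity (resp.\ birationality) of $K_{X'}+\roundup{q_4\pi^*(K_X)+Q_4'}+F$ (resp.\ $+M_0$) into effectivity (resp.\ birationality) of $K_{X'}+\roundup{\widetilde{q}_4\pi^*(K_X)+Q_4'}$ after absorbing $F\leq M_0\leq m_0\pi^*(K_X)$ into the round-up.

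For the inductive step, with $n\geq 5$ and (1), (2) assumed in all dimensions between $4$ and $n-1$, I would take a pencil $\Lambda\subset|m_0K_V|$ inducing the fibration $f\colon X'\lra B$ on a good model (as in assumption $(\pounds)$ from the proof of Proposition \ref{3-eff}), whose general fiber $F$ is $(n-1)$-dimensional of general type with $\lambda(F)\leq\lambda_{n-1}=\widetilde{\lambda}_{n-1}$. For (1) I would write $\widetilde{q}_n=q+\widetilde{\lambda}_n$ with $q>u_{n-1}(2\widetilde{\lambda}_n+1)$ and invoke Kawamata--Viehweg vanishing to produce the surjection
$$H^0(X',K_{X'}+\roundup{q\pi^*(K_X)+Q_n'}+F)\twoheadrightarrow H^0(F,K_F+\roundup{q\pi^*(K_X)+Q_n'}|_F).$$
Combining the inequality $\pi^*(K_X)|_F\geq\tfrac{1}{2\widetilde{\lambda}_n+1}\sigma^*(K_{F_0})$ (from Lemma \ref{b>0} together with inequality (1)) with the inductive hypothesis (1) applied to $F$ and with Lemma \ref{yes}, the right-hand side is nonzero. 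Hence $K_{X'}+\roundup{q\pi^*(K_X)+Q_n'}+F$ is effective, and absorbing $F$ into $\widetilde{\lambda}_n\pi^*(K_X)$ yields (1).

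Statement (2) follows a parallel pattern. With $\widetilde{q}_n=q+\widetilde{\lambda}_n>w_n$, i.e.\ $q>w_{n-1}(2\widetilde{\lambda}_n+1)$, (1) already gives $|K_{X'}+\roundup{q\pi^*(K_X)+Q_n'}|\neq\emptyset$ (after checking $w_n-\widetilde{\lambda}_n>u_n$, which follows from a trivial induction showing $w_t\geq u_t+1$ for all $t\geq 4$). Lemma \ref{diff} then shows that $|K_{X'}+\roundup{q\pi^*(K_X)+Q_n'}+M_0|$ distinguishes distinct general fibers of $f$. Vanishing and $M_0|_F\sim 0$ reduce birationality on a general fiber to that of $|K_F+\roundup{q\pi^*(K_X)|_F+Q_n'|_F}|$, which follows from the inductive hypothesis (2) applied to $F$ because $\tfrac{q}{2\widetilde{\lambda}_n+1}>w_{n-1}$. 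Absorbing $M_0$ into $\widetilde{\lambda}_n\pi^*(K_X)$ gives (2), and (3) is immediate from (2) via \ref{reduction} by setting $\widetilde{q}_n=m-1>w_n$.

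The most delicate point to formalize will be the interplay between the round-up and the restriction to $F$, namely the inequality $\roundup{q\pi^*(K_X)+Q_n'}|_F\geq\roundup{q\pi^*(K_X)|_F+Q_n'|_F}$, which only holds once $X'$ has been chosen so that the fractional parts have simple normal crossing support --- precisely the standing assumption $(\pounds)$ carried throughout the paper. Beyond this routine bookkeeping, the structural engine is a systematic combination of Kawamata--Viehweg vanishing, the semi-positivity inequality (1), and the induction on $F$, mirroring exactly the strategy already deployed in Propositions \ref{d1}, \ref{d2} and Theorems \ref{b1}, \ref{b2}.
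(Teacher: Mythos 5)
Your proposal is correct and follows essentially the same route as the paper: induction on $n$ with base case $n=4$ given by Proposition \ref{4-eff}(1) and Theorem \ref{4-folds}(1) (absorbing $F\leq M_0\leq m_0\pi^*(K_X)$ into the round-up), then restriction to a general fiber via Kawamata--Viehweg vanishing, inequality (1), Lemma \ref{yes}, and the recursion $u_n=\widetilde{\lambda}_n+u_{n-1}(2\widetilde{\lambda}_n+1)$, $w_n=\widetilde{\lambda}_n+w_{n-1}(2\widetilde{\lambda}_n+1)$. Your explicit verification that $w_n-\widetilde{\lambda}_n>u_n$ (so that Lemma \ref{diff} applies to separate fibers) is a detail the paper leaves implicit, but it is the same argument.
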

\begin{proof} Statement (3) is a direct result of (2). So we have to prove (1) and (2).

When $n=4$, the conditions in (1) and (2) read
$\widetilde{q}_4>u_4:=75m_0+37$ and
$\widetilde{q}_4>w_4:=151m_0+75$. Both the statements are nothing
but Proposition \ref{4-eff}(1) and Theorem \ref{4-folds}(1), noting
that $m_0\pi^*(K_X)\geq M_0\geq F$. Besides if we take
$m_0=\lambda(V)$, the statements are true for
$\widetilde{q}_4>u_4:=75\lambda(V)+37$ and
$\widetilde{q}_4>w_4:=151\lambda(V)+75$.

Assume that the statements are correct for $n-1$ dimensional
varieties. By definition, $m_0\geq \lambda(V)$ and $\lambda_n\geq
\lambda(V)$. Because $\widetilde{q}_n\pi^*(K_X)\geq
(\widetilde{q}_n-m_0)\pi^*(K_X)+M_0$, we will study
$|K_{X'}+\roundup{(\widetilde{q}_n-m_0)\pi^*(K_X)+Q_n'}+M_0|$. Now
the vanishing theorem gives:
\begin{eqnarray*}
&&|K_{X'}+\roundup{(\widetilde{q}_n-m_0)\pi^*(K_X)+Q_n'}+M_0||_F\\
&=& |K_F+\roundup{(\widetilde{q}_n-m_0)\pi^*(K_X)+Q_n'}|_F|\\
&\supset&
|K_F+\roundup{(\widetilde{q}_n-m_0)\pi^*(K_X)|_F+{Q_n'}|_F}\\
&\supset&
|K_F+\roundup{\frac{\widetilde{q}_n-m_0}{2m_0+1}\sigma^*(K_{F_0})+{Q_n'}|_F}|.
\end{eqnarray*}
Clearly $\dim (F)=n-1$, the induction hypothesis says
$$K_F+\roundup{\frac{\widetilde{q}_n-m_0}{2m_0+1}\sigma^*(K_{F_0})+{Q_n'}|_F}$$
is effective when $\frac{\widetilde{q}_n-m_0}{2m_0+1}>u_{n-1}$ and
$|K_F+\roundup{\frac{\widetilde{q}_n-m_0}{2m_0+1}\sigma^*(K_{F_0})+{Q_n'}|_F}|$
gives a birational map when
$\frac{\widetilde{q}_n-m_0}{2m_0+1}>w_{n-1}$. Both conditions can
be replaced by
$\widetilde{q}_n>u_n=\widetilde{\lambda}_n+u_{n-1}(2\widetilde{\lambda}_n+1)$
and
$\widetilde{q}_n>w_n=\widetilde{\lambda}_n+w_{n-1}(2\widetilde{\lambda}_n+1)$,
where $\widetilde{\lambda}_n=m_0$. Note however it is enough to
take $\widetilde{\lambda}_i=\lambda_i$ for all $i<n$. We are done.
\end{proof}

\begin{thm}\label{b3} Assume $n=\dim (V)\geq 5$ and
$P_{m_0}\geq 2$ for some positive integer $m_0$ and $\iota\leq
n-4$. Keep the same notation as in \ref{setup}. Let $Q_n$ be any
nef $\bQ$-divisor on $V$ and $Q_n'$ any nef $\bQ$-divisor on $X'$.
Then
\begin{itemize}
\item[(1)] $K_{X'}+\roundup{q_n\pi^*(K_X)+Q_n'}+(\iota-1)M_0$ is effective for
all rational numbers $q_n>(2m_0+1)^{\iota-1} u_{n-\iota+1}$. 

\item[(2)] $|K_{X'}+\roundup{q_n\pi^*(K_X)+Q_n'}+(\iota-1)M_0|$ gives a
birational map for all rational numbers $q_n>(2m_0+1)^{\iota-1}
w_{n-\iota+1}$. 

\item[(3)] $|mK_V+\roundup{Q_n}|$ gives a birational map for all integers $m\geq
(2m_0+1)^{\iota-1}w_{n-\iota+1}+m_0(\iota-1)+2.$
\end{itemize}
\end{thm}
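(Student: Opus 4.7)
The plan is to prove (1) and (2) simultaneously by induction, with (3) following from (2) via \ref{reduction}: the extra $m_0(\iota-1)+2$ in the coefficient of $K_V$ absorbs the $(\iota-1)M_0$ term (using $m_0\pi^*(K_X)\geq M_0$) together with the passage between $K_V$ and $K_{X'}$. The base case is $\iota=1$, where the bound simplifies to $(2m_0+1)^0 w_{n-0}=w_n$ and is exactly Theorem \ref{d31}(1),(2).

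For the inductive step $\iota\geq 2$, I would work with the canonical fibration $f\colon X'\lra B$ coming from the pencil $\Lambda\subset|m_0K_V|$ as set up in \ref{setup} and \ref{aasum}, and pick a general fiber $F$ of dimension $n-1$. A dimension count shows $\iota_F=\iota-1$: since $\Lambda\subset|m_0K_V|$, the map $\varphi_\Lambda$ factors through $\varphi_{m_0}$, so $\varphi_{m_0}(F)$ is a generic hyperplane section of the $m_0$-canonical image of $V$. This observation is decisive, because it forces $P_{m_0}(F)\geq\iota\geq 2$ (the image lives in $\mathbb{P}^{P_{m_0}(F)-1}$ and has dimension $\iota-1$), which allows us to retain the same parameter $m_0$ on $F$, and $\iota_F\leq(n-1)-4$ keeps $F$ inside the regime of the theorem (or in the regime of Theorem \ref{d31} when $\iota=2$). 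After refining $X'$ as in assumption $(\pounds)$, the Kawamata-Viehweg vanishing theorem applied to the nef and big $q_n\pi^*(K_X)+Q_n'$, combined with $F|_F\sim 0$, yields
\[
|K_{X'}+\roundup{q_n\pi^*(K_X)+Q_n'}+(\iota-1)M_0||_F\supset|K_F+\roundup{q_F\sigma^*(K_{F_0})+Q_n'|_F}+(\iota-2)M_0|_F|,
\]
where $q_F\geq q_n/(2m_0+1)$ by Lemma \ref{b>0} and inequality $(1)$; Lemma \ref{diff} then reduces the birationality problem on $X'$ to the restricted one on $F$. The inductive hypothesis on $F$ gives birationality provided $q_F>(2m_0+1)^{\iota-2}w_{n-\iota+1}$, and multiplying by $2m_0+1$ recovers the stated bound $q_n>(2m_0+1)^{\iota-1}w_{n-\iota+1}$. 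Statement (1) is handled in parallel, using the effectivity half of the inductive hypothesis (the analogue of Proposition \ref{d2} on $F$) in place of the birationality half.

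The main obstacle is the bookkeeping of the recursion parameters $\widetilde{\lambda}_i$: one must verify that $w^F_{n-\iota+1}$ produced by the induction on $F$ matches the $w_{n-\iota+1}$ appearing in the statement for $V$. For $\iota\geq 3$ the index $n-\iota+1$ sits strictly below the top $n-1$ of $F$'s recursion, so both $w$'s use only the universal $\lambda_i$'s and automatically coincide. For $\iota=2$ the two indices collide at $n-1$, and compatibility is forced by $P_{m_0}(F)\geq 2$ together with the monotonicity $\lambda(F)\leq\lambda_{n-1}$ of the universal $\lambda$'s; Lemma \ref{yes} handles the harmless passage to the relative minimal model. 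Once these are checked, the recursion $w_i=\widetilde{\lambda}_i+w_{i-1}(2\widetilde{\lambda}_i+1)$ telescopes cleanly through the single fiberwise reduction and the proof closes.
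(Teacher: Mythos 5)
Your overall strategy mirrors the paper's: restrict to a general fiber $F$ via Kawamata--Viehweg vanishing, convert $\pi^*(K_X)|_F$ into $\sigma^*(K_{F_0})$ via inequality (1) (picking up the $1/(2m_0+1)$ factor), distinguish fibers with Lemma~\ref{diff}, and anchor the recursion on Theorem~\ref{d31}. Statement (3) from (2), the base case $\iota=1$, and the recursion $w_i=\widetilde{\lambda}_i+w_{i-1}(2\widetilde{\lambda}_i+1)$ all match the paper.

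However, there is a genuine gap in your inductive step, and it is concentrated in the sentence ``A dimension count shows $\iota_F=\iota-1$.'' What the hyperplane-section argument actually shows is only that the image of $F$ under the \emph{restricted} linear system $|m_0K_V|\big|_F$ has dimension $\iota-1$, i.e.\ $\iota_F\geq\iota-1$. The intrinsic space $H^0(F,m_0K_F)$ may strictly contain the restriction of $H^0(V,m_0K_V)$, so the $m_0$-canonical dimension $\iota_F$ of $F$ can be \emph{strictly larger} than $\iota-1$. You use the upper bound $\iota_F\leq\iota-1$ crucially: it is what forces $\iota_F\leq(n-1)-4$, which is the hypothesis you need to invoke Theorem~\ref{b3} inductively on $F$. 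If instead $\iota_F>n-5$, you would have to invoke Theorem~\ref{b2} or Theorem~\ref{b1} on $F$, whose bounds $75(2m_0+1)^{(n-1)-3}$ resp.\ $\min\{4m_0+4,57\}(2m_0+1)^{(n-1)-3}$ are not a priori dominated by $(2m_0+1)^{\iota-2}w_{n-\iota+1}$, and you make no such comparison.

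The paper avoids this issue precisely by not running the induction through Theorem~\ref{b3}. It iterates the restriction step $\iota-1$ times, at each stage requiring only the \emph{lower} bound $\iota_{F^{(j)}}\geq 1$ (to guarantee a pencil in $|m_0K_{F^{(j)}}|$ and hence $P_{m_0}(F^{(j)})\geq 2$), and applies Theorem~\ref{d31}---which is stated for \emph{any} $\iota\geq 1$---exactly once, on the terminal variety $W$ of dimension $n-\iota+1$. Because $\ref{d31}$ has no upper constraint on the $m_0$-canonical dimension, the intermediate $\iota$'s never need to be pinned down. To repair your proposal, either reproduce the paper's tail recursion into Theorem~\ref{d31}, or supply the missing monotonicity argument showing that if $\iota_F>\iota-1$ the alternative regimes (b1, b2, or b3 with a larger $\iota_F$) still produce a bound $\leq(2m_0+1)^{\iota-2}w_{n-\iota+1}$ after multiplying by $2m_0+1$; as written the claim of equality $\iota_F=\iota-1$ is neither proved nor true in general.
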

\begin{proof} Statement (3) is direct from (2). So we only need to prove (1)
and (2).

When $n=5$, one necessarily has $\iota=1$ and the statements are
nothing but those in Theorem \ref{d31}.

First We consider statement (1). We may restrict the problem to $F$
by the vanishing theorem. Then, since $M_{0}|_F\leq
m_0\sigma^*(K_{F_0})\leq m_0K_F$, we may study the linear system
$$|K_F+\roundup{\frac{q_n}{2m_0+1}\sigma^*(K_{F_0})+{Q_n'}|_F}+(\iota-2){M_0}|_F|. \eqno{(5)}$$
Note that $\dim \Phi_{|m_0K_F|}(F)\geq \Phi_{|M_0|}(F)\geq \iota-1$.
Then we can do an induction and repeat this program for finite
times. Finally we are reduced to study the non-emptyness of the
linear system on $W$ of dimension $n-\iota+1$:
$$|K_W+\roundup{\frac{q_n}{(2m_0+1)^{\iota-1}}\tau^*(K_{W_0})+{Q_n'}|_W}| \eqno{(6)}$$
where $\tau\colon W\rightarrow W_0$ is a contraction morphism to
the minimal model. Furthermore $\dim \Phi_{|m_0K_W|}(W)\geq \dim
\Phi_{|M_0|}(W)\geq 1$. Now Theorem \ref{d31}(1) says that
$\frac{q_n}{(2m_0+1)^{\iota-1}}>u_{n-\iota+1}$ is enough to secure
the non-emptyness of the linear system (6), where $u_{n-\iota+1}$ is
obtained by the sequence $\{u_t\}_{t=4}^{n-\iota+1}$ with
$u_i=\widetilde{\lambda}_i+u_{i-1}(2\widetilde{\lambda}_i+1)$,
$u_4=75\widetilde{\lambda}_4+37$,
$\widetilde{\lambda}_{n-\iota+1}=m_0$ and, for all other $i$,
$\widetilde{\lambda}_i=\lambda_i$. Therefore statement (1) is
correct.

Statement (1) and Lemma \ref{diff} allow us to reduce the problem
onto lower dimensional varieties. Thus what we are left to do is
similar to that for statement (1). So after one step restriction, we
get the linear system (5) on $F$. After successive restrictions and
inductions, we may obtain the linear system (6) on $W$ of dimension
$n-\iota+1$. Now we may apply Theorem \ref{d31}(2) to get the
condition $\frac{q_n}{(2m_0+1)^{\iota-1}}>w_{n-\iota+1}$ where
$w_{n-\iota+1}$ is obtained by the sequence
$\{w_t\}_{t=4}^{n-\iota+1}$ with
$w_i=\widetilde{\lambda}_i+w_{i-1}(2\widetilde{\lambda}_i+1)$,
$w_4=151\widetilde{\lambda}_4+75$,
$\widetilde{\lambda}_{n-\iota+1}=m_0$ and, for all other $i$,
$\widetilde{\lambda}_i=\lambda_i$. We are done.
\end{proof}

Finally we propose the following:

\begin{OP} As we have seen, inequality (1) in Section 2 is
the key step to get optimal birationality. Can one find a better
constant $\gamma>\frac{1}{2m_0+1}$ such that $\pi^*(K_X)|_F\geq
\gamma \sigma^*(K_{F_0})$? When $\dim(V)=3$,
$\gamma=\frac{p}{m_0+p}$ is nearly optimal by virtue of our
previous work. \end{OP}

\begin{setup}{\bf Acknowledgment.} This note grew out of
discussions with Jun Li to whom I feel considerably indebted. I
would like to thank both Jun Li and the Mathematics Research Center
of Stanford University for the support of my visit in the Spring of
2007. Thanks are also due to Jungkai A. Chen, Christopher D. Hacon, Yujiro Kawamata, Eckart Viehweg, De-Qi Zhang and Kang Zuo for their generous helps and stimulating
discussions. Finally I am grateful to the referee for several technical  suggestions.
\end{setup}



\begin{thebibliography}{99}


\bibitem{BCHM} C. Birkar, P. Cascini, C. D. Hacon, J. McKernan, {\em
Existence of minimal models for varieties of log general type},
arXiv: math/0610203

\bibitem{Bom} E. Bombieri, {\em
Canonical models of surfaces of general type}. Inst. Hautes Etudes
Sci. Publ. Math. {\bf 42} (1973), 171-219.


\bibitem{C-H} J. A. Chen, C. D. Hacon, {\em Pluricanonical maps of
varieties of maximal Albanese dimension}. Math. Ann. {\bf 320}
(2001), 367-380

\bibitem{C-H2} J. A. Chen, C. D. Hacon, {\em Pluricanonical
systems on irregular 3-folds of general type}. Math. Z. {\bf
255}(2007), no. 2, 343-355


\bibitem{Explicit_I} J. A. Chen, M. Chen, {\em Explicit birational
geometry of 3-folds of general type, I}, Ann. Sci. \'Ecole Norm. Sup. (to appear). arXiv: 0810.5041


\bibitem{Explicit_II} J. A. Chen, M. Chen, {\em Explicit birational
geometry of 3-folds of general type, II}, arXiv: 0810.5044



\bibitem{MPCPS} M. Chen, {\em Canonical stability in terms of
singularity index for algebraic threefolds}, Math. Proc. Camb.
Phil. Soc. {\bf 131} (2001), 241-264.

\bibitem{JPAA} M. Chen, {\em
On the ${\bf Q}$-divisor method and its application}. J. Pure Appl.
Algebra {\bf 191} (2004), 143--156.

\bibitem{Chen-Zuo} M. Chen, K. Zuo, {\em Complex projective 3-folds
with non-negative canonical Enler-Poincare characteristic}, Comm.
Anal. Geom. {\bf 16} (2008), 159-182.



\bibitem{H-M} C. D. Hacon, J. M$^{\text{\rm c}}$Kernan, {\em
Boundedness of pluricanonical maps of varieties of general type},
Invent. Math. {\bf 166} (2006), no. 1, 1-25
\bibitem{HsM} C. D. Hacon, J. M$^{\rm c}$Kernan,  {\em Shokurov's rational
connectedness conjecture}, Duke Math. J. {\bf 138} (2007), no. 1,
119-136

\bibitem{HM2} C. D. Hacon, J. M$^{\text{c}}$kernan, {\em Existence of
minimal models for varieties of log general type, II}, preprint.

\bibitem{KV} Y. Kawamata, {\em A generalization of Kodaira-Ramanujam's
vanishing theorem}, Math. Ann. {\bf 261} (1982), 43-46.

\bibitem{Kol} J. Koll\'ar, {\em Higher direct images of dualizing
sheaves I}, Ann. Math. {\bf 123} (1986), 11-42.




\bibitem{Pa} G. Pacienza, {\em On the uniformity of the Iitaka fibration}.
preprint.  arXiv: 0709.0310.
\bibitem{Sho} V. V. Shokurov, {\em On rational connectedness}, Math.
Notes {\bf 68} (2000), 652-660.

\bibitem{YPG} M. Reid,  {\em Young person's guide to canonical
singularities}, Proc. Symposia in pure Math. {\bf 46}(1987),
345-414.

\bibitem{Siu} Y. T. Siu, {\em A general non-vanishing theorem and an
analytic proof of the finite generation of the canonical ring},
arXiv:math/0610740

\bibitem{Taka} S. Takayama, {\em Pluricanonical systems on algebraic
varieties of general type}, Invent. Math.  {\bf 165} (2006), no. 3,
551-587.

\bibitem{Tsuji} H. Tsuji, {\em Pluricanonical systems of projective
varieties of general type. I.} Osaka J. Math.  {\bf 43} (2006), no.
4, 967--995

\bibitem{VV} E. Viehweg, {\em Vanishing theorems}, J. reine angew. Math. {\bf
335} (1982), 1-8.
\bibitem{V} E. Viehweg, {\em
Weak positivity and the additivity of the Kodaira dimension for
certain fibre spaces}. Proc. Algebraic Varieties and Analytic
Varieties, Tokyo 1981. Adv. Studies in Math. {\bf 1},
Kinokunya-North-Holland Publ. 1983, 329-353


\bibitem{Viehweg-Zhang} E. Viehweg, D.-Q. Zhang, {\em Effective Iitaka fibrations},
J. Algebraic Geom.  {\bf 18} (2009), 711-730. arXiv:0707.4287

\end{thebibliography}
\end{document}